\pgfplotsset{compat=1.16}
\newcommand{\norm}[2][]{{#1\|}{#2}{#1\|}}
\newcommand{\inprod}[3][]{{#1\langle}{#2},{#3}{#1\rangle}}
\DeclareMathOperator{\Id}{Id}
\DeclareMathOperator{\dom}{dom}
\DeclareMathOperator{\zer}{zer}
\DeclareMathOperator{\ran}{ran}
\DeclareMathOperator{\gra}{gra}
\newcommand{\R}{\mathbb{R}}
\newcommand{\N}{\mathbb{N}}
\newcommand{\Hprim}{\mathcal{H}}
\newcommand{\Hsec}{\mathcal{K}}
\newcommand{\Hdual}{\mathcal{G}}
\newtheorem{prop}{Proposition}[section]
\numberwithin{prop}{section}
\newtheorem{ass}{Assumption}[section]
\numberwithin{ass}{section}
\newtheorem{lem}{Lemma}[section]
\numberwithin{lem}{section}
\newtheorem{thm}{Theorem}[section]
\numberwithin{thm}{section}
\newtheorem{cor}{Corollary}[section]
\numberwithin{cor}{section}
\numberwithin{defin}{section}
\newtheorem{rem}{Remark}[section]
\numberwithin{rem}{section}
	\crefname{prop}{Proposition}{Propositions}
	\crefname{ass}{Assumption}{Assumptions}
	\crefname{lem}{Lemma}{Lemmas}
	\crefname{thm}{Theorem}{Theorems}
	\crefname{cor}{Corollary}{Corollaries}
	\crefname{defin}{Definition}{Definitions}
	\crefname{rem}{Remark}{Remarks}
\newcommand{\remove}[1]{}
\title{Nonlinear Forward-Backward Splitting with Momentum Correction}
\author{
	Martin Morin
	\thanks{Department of Automatic Control, Lund University, (%
		\url{martin.morin@control.lth.se},
		\url{sebastian.banert@control.lth.se},
		\url{pontus.giselsson@control.lth.se}%
		)
	}
	\and
	Sebastian Banert\footnotemark[1]
	\and
	Pontus Giselsson\footnotemark[1]
}
\date{}
\begin{document}

\maketitle

\begin{abstract}
	\noindent
	The nonlinear, or warped, resolvent recently explored by Giselsson and B\`ui\--Combettes has been used to model a large set of existing and new monotone inclusion algorithms.
To establish convergent algorithms based on these resolvents, corrective projection steps are utilized in both works.
We present a different way of ensuring convergence by means of a nonlinear momentum term, which in many cases leads to cheaper per-iteration cost.
The expressiveness of our method is demonstrated by deriving a wide range of special cases.
These cases cover and expand on the forward-reflected-backward method of Malitsky\--Tam, the primal-dual methods of V\~u\--Condat and Chambolle\--Pock, and the forward-reflected-Douglas\--Rachford method of Ryu\--V\~u.
A new primal-dual method that uses an extra resolvent step is also presented as well as a general approach for adding momentum to any special case of our nonlinear forward-backward method, in particular all the algorithms listed above.

\end{abstract}

\section{Introduction}\label{sec:intro}

Given a real Hilbert space $\Hprim$, we consider the problem of finding a zero $x\in \Hprim$ of the sum of a maximally monotone operator $A\colon\Hprim\to 2^\Hprim$ and a cocoercive operator $C\colon\Hprim\to\Hprim$, i.e.,
\begin{equation}\label{eq:prob}
	0 \in Ax + Cx.
\end{equation}
If the resolvent $(\Id + A)^{-1}$ of $A$ is easily computable, this problem can be solved with the forward-backward splitting method \cite{goldsteinConvexProgrammingHilbert1964,levitinConstrainedMinimizationMethods1966}.
Since this might not be the case, great effort has been devoted to constructing other splitting methods that can exploit any additional structure of $A$, sometimes further assuming $C=0$ \cite{lionsSplittingAlgorithmsSum1979, davisThreeOperatorSplittingScheme2017,combettesPrimalDualSplittingAlgorithm2012,botSolvingSystemsMonotone2013,raguetGeneralizedForwardBackwardSplitting2013,combettesAsynchronousBlockiterativePrimaldual2018,combettesSolvingCompositeFixed2020}.
This work presents an alternative approach for analyzing and constructing such splitting methods by formulating them as different instances of a forward-backward method with a nonlinear resolvent $(M+A)^{-1}\circ M$ where $M\colon\Hprim\to\Hprim$ is a (potentially) nonlinear kernel.

Nonlinear resolvents---or warped resolvents in the terminology of \cite{buiWarpedProximalIterations2020}---were recently explored in \cite{giselssonNonlinearForwardBackwardSplitting2021,buiWarpedProximalIterations2020} with precursors available in \cite{kassay1985proximal,konnovCombinedRelaxationMethods2006}.
These works are preceded by, or developed in parallel with, several other generalizations to the concept of a resolvent.
Using a resolvent with a strongly positive self-adjoint bounded linear kernel $P$ in the standard forward-backward method has long been known to converge.
In fact, it is simply forward-backward splitting applied to the scaled problem $0 \in P^{-1} Ax + P^{-1} Cx$, which is a monotone inclusion problem in the Hilbert space given by the inner product $\inprod{P(\cdot)}{\cdot}$.
The conditions on the kernel have been further relaxed in \cite{latafatAsymmetricForwardBackward2017}, which allows for non-self-adjoint linear kernels.
In the multiple works on Bregman-distance based resolvents, for instance \cite{bregmanRelaxationMethodFinding1967,bauschkeBregmanMonotoneOptimization2003a,buiBregmanForwardBackwardOperator2021,ecksteinNonlinearProximalPoint1993,bauschkeJointMinimizationAlternating2006,burachikInexactProximalPoint2010,bauschkeRegularizingBregmanMoreau2018}, the linearity condition is dropped altogether by allowing the kernel to be the gradient of some differentiable convex function.
These relaxations allow the resolvent to be adapted to a particular problem, either to improve the speed of convergence or to make an otherwise intractable resolvent evaluation tractable.
However, this extra freedom may come at a cost.
The algorithms of \cite{latafatAsymmetricForwardBackward2017,giselssonNonlinearForwardBackwardSplitting2021,buiWarpedProximalIterations2020,konnovCombinedRelaxationMethods2006} all need an extra corrective projection step to ensure that any nonlinearities and asymmetries of the kernel do not prevent convergence.
The primary contribution of this paper is a different approach for correcting the update, removing the need to perform a potentially expensive projection.
Convergence is instead ensured with a corrective momentum term that reuses information from previous iterations, making it possible to achieve lower per-iteration costs.

The strength of nonlinear resolvents lies in their substantial modeling power which allows for a unified view of a large set of algorithms.
Both \cite{giselssonNonlinearForwardBackwardSplitting2021,buiWarpedProximalIterations2020} present numerous algorithms that can be interpreted as forward-backward methods with nonlinear resolvents.
Our new nonlinear forward-backward method further expands on these modeling capabilities and the second half of this paper is dedicated to deriving both new and existing algorithms as special cases.

Among already existing methods, we show that the forward-(half)-reflected-backward method in \cite{malitskyForwardBackwardSplittingMethod2020} is a special case of our method and highlight its connection to the similar forward-backward-(half)-forward method \cite{tsengModifiedForwardBackwardSplitting2000,briceno-ariasForwardBackwardHalfForwardAlgorithm2018} via the nonlinear resolvent.
We present two new four-operator primal-dual splitting methods, the first of which has, among others, V\~u\--Condat \cite{vuSplittingAlgorithmDual2013,condatPrimalDualSplitting2013} and Chambolle\--Pock \cite{chambolleFirstOrderPrimalDualAlgorithm2011} as special cases.
V\~u\--Condat and Chambolle\--Pock have been shown to be ordinary forward-backward methods \cite{heConvergencePrimalDualHybrid2014} and to have Douglas\--Rachford splitting \cite{lionsSplittingAlgorithmsSum1979} as a special case.%
\footnote{
	In order to formulate the standard Douglas\--Rachford as a forward-backward method, singular resolvent kernels needs to be allowed.
	The analysis of this paper will not allow for this but can be modified to do so.
	}
Our first primal-dual method is an expansion of this to the nonlinear resolvent setting, giving us the forward-reflected-Douglas\--Rachford method of \cite{ryuFindingForwardDouglasRachfordForward2020} and the novel forward-half-reflected-Douglas\--Rachford method as special cases.
Our second primal-dual method solves the same problem as the first one but utilizes three resolvent steps, two of which are of the same operator.
This method is, as far as we know, completely novel.

Different kinds of momentum have long been used to accelerate the convergence of first-order methods \cite{polyakMethodsSpeedingConvergence1964,alvarezInertialProximalMethod2001,moudafiConvergenceSplittingInertial2003,botInertialDouglasRachford2015,lorenzInertialForwardBackwardAlgorithm2015,beckFastIterativeShrinkageThresholding2009,attouchConvergenceRatesInertial2018,botInertialForwardbackwardforwardPrimaldual2016} and, due to the use of a momentum-like correction term, our nonlinear forward-backward method naturally lend itself to modeling momentum methods.
Momentum can be incorporated directly into the design of a special case of our main algorithm but we also present an approach to add momentum to any special case, regardless of whether it initially was designed with momentum or not.
The approach is demonstrated on the forward-half-reflected-backward method of \cite{malitskyForwardBackwardSplittingMethod2020}, which gives a novel momentum algorithm that extends the relaxed momentum algorithm in \cite{malitskyForwardBackwardSplittingMethod2020} to include a cocoercive term.
Our convergence conditions compare favorably to previous work with a larger range of possible choices of the momentum parameter, even in the more restrictive special case of ordinary forward-backward splitting with momentum.

\subsection{Outline}
We start by presenting basic notation, preliminary results, and define some operator properties.
The proposed nonlinear forward-backward algorithm, along with all necessary assumptions on both the problem \cref{eq:prob} and the different design parameters, is presented in \cref{sec:prob-alg}.
\Cref{sec:conv} contains the main convergence proof.

In the remainder of the paper, we present and discuss new or already existing special cases of our nonlinear forward-backward method.
\Cref{sec:add-momentum} presents a way of adding momentum to any special case of our main algorithm.
\Cref{sec:fhrb} derives the forward-half-reflected-backward method of \cite{malitskyForwardBackwardSplittingMethod2020} as a special case and uses the previously presented approach to add momentum to it.
Two new primal-dual methods are derived in \cref{sec:primdual}.
\Cref{sec:primdual-tri} contains an algorithm that expands on the methods of V\~u\--Condat and Chambolle\--Pock as well as the forward-reflected-Douglas\--Rachford of \cite{ryuFindingForwardDouglasRachfordForward2020}.
In \Cref{sec:primdual-res} a, to the authors' knowledge, completely new primal-dual method that uses one additional resolvent evaluation per iteration is derived.
We end the paper with a brief conclusion.

\subsection{Notation and Preliminaries}
Let $\R$ be the set of real numbers, $\N = \{0,1,\dots\}$ be the set of natural numbers, $\N_+ = \{1,2,\dots\}$ be the set of non-zero natural numbers, and let $\Hprim$ be a real Hilbert space.
The set $\mathcal{P}(\Hprim)$ is the set of bounded linear operators $S\colon\Hprim \to \Hprim$ that are self-adjoint and strongly positive, i.e., there exists $m > 0$ such that
\[
	\inprod{Sx}{x} \geq m\norm{x}^2,\quad \forall x\in\Hprim.
\]
If $S \in \mathcal{P}(\Hprim)$, then $S$ is invertible and $S^{-1} \in \mathcal{P}(\Hprim)$.

For the remainder of this section, we let $S\in\mathcal{P}(\Hprim)$.
The scaled inner product is defined as $\inprod{\cdot}{\cdot}_S = \inprod{S(\cdot)}{\cdot}$ and the scaled norm as $\norm{\cdot}_{S} = \sqrt{\inprod{\cdot}{\cdot}_S}$.
The unscaled and scaled norms are equivalent, i.e., there exist $M,m>0$ such that $M\norm{x} \geq \norm{x}_S \geq m\norm{x}$ for all $x\in\Hprim$.
For all $a,b,c,d\in\Hprim$, we have the identity
\begin{equation}\label{eq:inprod-diff-eq}
\begin{aligned}
	2\inprod{a-b}{d-c}_S
	&= \|a-c\|_S^2 - \|b-c\|_S^2 - \|a-d\|_S^2 + \|b-d\|_S^2.
\end{aligned}
\end{equation}

A set-valued operator $A\colon\Hprim\to 2^\Hprim$ is \emph{monotone} if
\[
	\inprod{u-v}{x-y} \geq 0,\quad \forall (x,u),(y,v) \in \gra A
\]
where
$
\gra A = \{(x,u) \mid u \in Ax\}
$
is the graph of $A$.
An operator $A$ is \emph{maximally monotone} if it is monotone and its graph is not a proper subset of the graph of another monotone operator.

For $\mu > 0$, a maximally monotone operator $A\colon\Hprim \to 2^\Hprim$ is \emph{$\mu$-strongly monotone w.r.t.\ $S$} if
\[
	\inprod{u-v}{x-y} \geq \mu\norm{x-y}_S^2,\quad \forall u\in Ax, \forall v\in Ay, \forall x,y\in\Hprim.
\]
This definition is equivalent to ordinary $\mu$-strong monotonicity of $S^{-1}\circ A$ in the Hilbert space given by the scaled inner product $\inprod{\cdot}{\cdot}_S$.
The analogous equivalences hold for the two following definitions as well.
For $L \geq 0$, an operator $B\colon\Hprim \to\Hprim$ is \emph{$L$-Lipschitz continuous w.r.t.\ $S$} if
\[
	\norm{Bx - By}_{S^{-1}} \leq L \norm{x-y}_S,\quad \forall x,y\in\Hprim.
\]
For $\ell > 0$, an operator $C\colon\Hprim \to\Hprim$ is \emph{$\ell^{-1}$-cocoercive w.r.t.\ $S$} if
\[
	\inprod{Cx-Cy}{x-y} \geq \ell^{-1}\norm{Cx-Cy}_{S^{-1}}^2,\quad \forall x,y\in\Hprim.
\]
An $\ell^{-1}$-cocoercive operator w.r.t.\ $S$ is $\ell$-Lipschitz continuous w.r.t.\ $S$.
For all operator properties, if no scaling $S$ is explicitly stated, we mean $S=\Id$.

Let $C$ be an $\ell^{-1}$-cocoercive operator w.r.t.\ $S$. Then the following three-point inequality holds:
\begin{equation}
	\label{eq:coco-three-point}
	\inprod{Cx-Cy}{z-y} \geq -\tfrac{\ell}{4}\norm{z-x}_{S}^2,\quad \forall x,y,z\in\Hprim.
\end{equation}
This is shown by inserting $x-x$ in the inner product on the left-hand side and using cocoercivity and Young's inequality,
\begin{align*}
	\inprod{Cx - Cy}{z - y}
	&= \inprod{Cx - Cy}{z-x} + \inprod{Cx - Cy}{x-y} \\
	&\geq \inprod{Cx - Cy}{z - x} + \ell^{-1}\norm{Cx - Cy}_{S^{-1}}^2 \\
	&= \inprod{S^{-\frac{1}{2}}(Cx - Cy)}{S^{\frac{1}{2}}(z - x)} + \ell^{-1}\norm{Cx - Cy}_{S^{-1}}^2 \\
	&\geq -\tfrac{\epsilon}{2}\norm{Cx - Cy}_{S^{-1}}^2 - \tfrac{1}{2\epsilon}\norm{z-x}_{S}^2 + \ell^{-1}\norm{Cx - Cy}_{S^{-1}}^2
\end{align*}
where $\epsilon > 0$.
Selecting $\epsilon = 2\ell^{-1}$ yields the desired inequality \eqref{eq:coco-three-point}. If $C=0$ or is constant, \eqref{eq:coco-three-point} holds with $\ell=0$.

\section{Problem and Algorithm}\label{sec:prob-alg}
Apart form the general problem structure of \cref{eq:prob}, we further assume that the operators satisfy the following standard assumptions.
\begin{ass}\label{ass:prob-assump}
	The operators of \cref{eq:prob} satisfy:
	\begin{enumerate}[(i)]
		\itemsep0em
		\item $A\colon \Hprim \to 2^\Hprim$ is maximally monotone.
		\item $C\colon \Hprim \to \Hprim$ is $\ell^{-1}$-cocoercive w.r.t.\ $S$, where $S\in\mathcal{P}(\Hprim)$.
		\item $\zer (A+C) \neq \emptyset$.
	\end{enumerate}
	If $C = 0$, we set $\ell = 0$.
\end{ass}
Since $\dom C = \Hprim$, the sum $A+C$ is maximally monotone and the problem could be reformulated as finding a zero of the single maximally monotone operator $A+C$.
However, as in ordinary forward-backward splitting, separating the problem into a maximally monotone and a cocoercive term and utilizing this structure will prove beneficial.
The fact that we assume cocoercivity w.r.t.\ $S$ entails no real restriction on the problem since the scaled norm $\norm{\cdot}_S$ is equivalent to $\norm{\cdot}$.
A cocoercive operator w.r.t.\ $S$ is therefore also cocoercive w.r.t.\ all other $\hat{S}\in\mathcal{P}(\Hprim)$ and vice versa, but with different cocoercivity constants.

The cocoercivity scaling $S$ is utilized directly in our algorithm.
In the simplest setting, $S$ acts as a form of preconditioning used to better adapt the algorithm to the specific geometry of the problem.
It can also be used as a more general design parameter with different choices of $S$ yielding different instances of our algorithm, see the primal-dual methods in \cref{sec:primdual} for examples of this.
Along with the scaling $S$, the algorithm has two additional iteration-dependent design parameters, a nonlinear kernel $M_k\colon\Hprim \to \Hprim$ and a positive momentum parameter $\gamma_k > 0$:
\begin{algorithm}[H]
	\caption{Nonlinear Forward-Backward with Momentum Correction}
	\label{alg:nofob_mom}
	Consider problem \cref{eq:prob} and let $S$ be such that \cref{ass:prob-assump} is satisfied.
	With $x_0, u_0 \in \Hprim$, for all $k\in\N$ iteratively perform
	\begin{align*}
		x_{k+1} &=(M_k + A)^{-1}(M_kx_k - Cx_k + \gamma_k^{-1}u_k), \\
		u_{k+1} &= (\gamma_kM_{k} - S)x_{k+1} - (\gamma_kM_{k} - S)x_{k},
	\end{align*}
	where $M_k\colon\Hprim\to\Hprim$ and $\gamma_k > 0$.
\end{algorithm}

Compared to \cite{giselssonNonlinearForwardBackwardSplitting2021,buiWarpedProximalIterations2020}, the elements of the sequence $(x_k)_{k\in\N}$ are given directly by a nonlinear forward-backward step and do not need an extra projection step.
Convergence is instead ensured by the addition of the corrective term $u_k$ to the forward step.
The main benefit of this approach is in how the corrective term $u_k$ is computed.
Both \cref{alg:nofob_mom} and the corresponding algorithm with projection correction \cite[Algorithm~3.1]{giselssonNonlinearForwardBackwardSplitting2021} will in general need to evaluate $M_k$ at two points.
For \cref{alg:nofob_mom}, the two points are $x_k$ and $x_{k+1}$ but this means that $M_k$ and $M_{k+1}$ are evaluated at the same point, i.e., $x_{k+1}$.
The cost of one of these evaluations can then be reduced if $M_{k}$ and $M_{k+1}$ are similar, for instance if $M_{k+1}x_{k+1}$ is a scalar multiplication of $M_kx_{k+1}$.
In order for \cite[Algorithm 3.1]{giselssonNonlinearForwardBackwardSplitting2021} to also evaluate $M_k$ at $x_k$ and $x_{k+1}$, it is required that all $M_k=\alpha_k^{-1}S$ with $S\in\mathcal{P}(\Hprim)$ and $\alpha_k > 0$ for all $k\in\N$.
The only instance of \cite[Algorithm 3.1]{giselssonNonlinearForwardBackwardSplitting2021} that satisfies this condition is ordinary forward-backward splitting in the scaled metric given by $\norm{\cdot}_{S}$.
This is in contrast to our work where all but one---\cref{alg:primdual-res}---of the special cases we cover have kernels that allow this reduction in computational cost.

The more similar $M_k$ and $\gamma_k^{-1}S$ are in \cref{alg:nofob_mom}, the more similar the nonlinear resolvent is to an ordinary scaled resolvent $(\gamma_k^{-1}S + A)^{-1}\circ\gamma_k^{-1}S$ and the smaller the corrective term $u_{k+1}$ will be.
No correction, i.e., $u_{k+1}=0$, is applied when $M_k=\gamma_k^{-1}S$ and \cref{alg:nofob_mom} then reduces to ordinary forward-backward splitting.
We quantify the difference between $M_k$ and $\gamma_k^{-1}S$ in the following assumption on the design parameters of \cref{alg:nofob_mom}.
\begin{ass}\label{ass:alg-assump}
	Assume that:
	\begin{enumerate}[(i)]
		\itemsep0em
		\item\label{ass:item:alg-assump-gamma} The sequence $(\gamma_k)_{k\in\N}$ is positively lower bounded, i.e., for each $k\in\N$, $\gamma_k \geq \gamma$ for some $\gamma > 0$.
		\item\label{ass:item:alg-assump-M} For each $k\in \mathbb{N}$, the nonlinear kernel $M_k\colon \Hprim \to \Hprim$ is such that 
   $\gamma_kM_k-S$ is $L_k$-Lipschitz continuous w.r.t. $S$, for some $L_k\geq 0$.
	\end{enumerate}
\end{ass}

These assumptions will form the basis of our convergence analysis.
First, we will use them to infer a few useful properties of the nonlinear kernel $M_k$.
\begin{prop}\label{prop:kernel-properties}
	Let \cref{ass:alg-assump} hold with $L_k \in[0, 1)$ for all $k\in\N$.
	Then $M_k$ is $2\gamma^{-1}$-Lipschitz continuous w.r.t.\ $S$, maximally monotone, and strongly monotone w.r.t.\ $S$ for all $k\in\N$.
	\begin{proof}
		The kernel $M_k$ satisfies $M_k = \gamma_k^{-1}(\gamma_kM_k - S) + \gamma_k^{-1}S$ and therefore is it $\gamma_k^{-1}(1+L_k)$-Lipschitz continuous w.r.t.\ $S$.
		Since $L_k < 1$ and $\gamma_k \geq \gamma$, the Lipschitz continuity claim is proven.
  Let $\rho_k=\tfrac{1-L_k}{\gamma_k}$. Then $L_k$-Lipschitz continuity of $\gamma_kM_k - S$ gives
  \begin{align*}
			L_k^2\norm{x-y}_S^2
			&\geq \norm{(\gamma_kM_k - S)x - (\gamma_kM_k - S)y}_{S^{-1}}^2 \\
			&= \norm{\gamma_kM_kx - \gamma_kM_ky-\rho_k\gamma_k S(x-y)}_{S^{-1}}^2 + \norm{(1-\rho_k\gamma_k)S(x-y)}_{S^{-1}}^2 \\
			&\quad - 2\gamma_k\inprod{M_kx-M_ky-\rho_k S(x-y)}{(1-\rho_k\gamma_k)x-y} \\
			&\geq L_k^2\norm{x-y}_{S}^2 - 2\gamma_kL_k\inprod{M_kx-M_ky-\rho_k S(x-y)}{x-y},
		\end{align*}
  where we have used $(1-\rho_k\gamma_k)=L_k$. Since $\gamma_k>0$ and $L_k>0$, we conclude that $M_k-\rho_k S$ is monotone and that $M_k$ is $\rho_k$-strongly monotone w.r.t. $S$.
		Maximality of $M_k$ follows from its continuity and monotonicity \cite[Corollary 20.28]{bauschkeConvexAnalysisMonotone2017}.
	\end{proof}
\end{prop}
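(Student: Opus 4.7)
The plan is to split the proposition into its three claims (Lipschitz continuity, strong monotonicity, maximality) and treat each separately, exploiting the decomposition $M_k = \gamma_k^{-1}(\gamma_k M_k - S) + \gamma_k^{-1} S$.

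For the Lipschitz claim, I would first observe that the operator $S$ itself is $1$-Lipschitz w.r.t.\ $S$, since $\|S(x-y)\|_{S^{-1}}^2 = \inprod{S(x-y)}{x-y} = \|x-y\|_S^2$. Combining this with Assumption~\ref{ass:alg-assump}\cref{ass:item:alg-assump-M} and the triangle inequality applied to the decomposition above yields that $M_k$ is $\gamma_k^{-1}(1 + L_k)$-Lipschitz w.r.t.\ $S$. The bound $2\gamma^{-1}$ then follows immediately from $L_k < 1$ and $\gamma_k \geq \gamma$.

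The strong monotonicity claim is the main technical step. The idea is to expand the squared $S^{-1}$-norm inequality coming from $L_k$-Lipschitz continuity of $\gamma_k M_k - S$. Expanding $\|(\gamma_k M_k - S)x - (\gamma_k M_k - S)y\|_{S^{-1}}^2$ produces three terms: $\|\gamma_k(M_kx - M_ky)\|_{S^{-1}}^2$, $\|S(x-y)\|_{S^{-1}}^2 = \|x-y\|_S^2$, and a cross term equal to $-2\gamma_k \inprod{M_kx - M_ky}{x - y}$ (after cancelling $S^{-1}$ with $S$). Dropping the nonnegative first term and rearranging the resulting inequality $L_k^2 \|x-y\|_S^2 \geq \|x-y\|_S^2 - 2\gamma_k \inprod{M_kx - M_ky}{x-y}$ gives $\tfrac{1 - L_k^2}{2\gamma_k}$-strong monotonicity w.r.t.\ $S$, which is strictly positive since $L_k < 1$. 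The subtle point to watch is the identification of the cross term in the $S^{-1}$-inner product, but after applying $\inprod{S^{-1}S(\cdot)}{\cdot}_{S} = \inprod{\cdot}{\cdot}$ this becomes routine.

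Finally, maximality follows for free: $M_k$ is Lipschitz and therefore continuous, and strong monotonicity w.r.t.\ $S$ implies monotonicity (since $\|\cdot\|_S^2 \geq 0$), so Corollary~20.28 in Bauschke--Combettes applies directly. I expect no real obstacle beyond keeping careful track of which norm and which inner product are being used in the strong monotonicity expansion.
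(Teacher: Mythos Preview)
Your proposal is correct and follows essentially the same approach as the paper's proof: the same decomposition $M_k = \gamma_k^{-1}(\gamma_kM_k - S) + \gamma_k^{-1}S$ for Lipschitz continuity, the same expansion of $\|(\gamma_kM_k - S)x - (\gamma_kM_k - S)y\|_{S^{-1}}^2$ with the nonnegative term dropped for strong monotonicity, and the same appeal to \cite[Corollary~20.28]{bauschkeConvexAnalysisMonotone2017} for maximality.
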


\section{Convergence}\label{sec:conv}

The convergence of \cref{alg:nofob_mom} will be established by the convergence of a quantity $\mathcal V_k$, defined in \cref{lem:lyapunov}.
The quantity $\mathcal V_k$ consists of the distance from the corrected iterate $x_k + S^{-1}u_k$ to an arbitrary solution (measured in the scaled norm $\norm{\cdot}_S$) and a residual term.
\Cref{thm:main-conv} will then establish the main convergence result.
Before that, we show that the algorithm generates a well-defined infinite sequence.

\begin{prop}\label{prop:alg-well-posed}
	Let \cref{ass:prob-assump,ass:alg-assump} hold with $L_k \in[0, 1)$ for all $k\in\N$.
	Then \cref{alg:nofob_mom} generates infinite sequences $(x_k)_{k\in\N}$ and $(u_k)_{k\in\N}$ uniquely determined by $x_0$ and $u_0$.
	\begin{proof}
		Since $S$, $C$, and $M_k$ are single-valued, it suffices to show that $(M_k + A)^{-1}$ is also single-valued and has full domain.
		By \cref{prop:kernel-properties}, the kernel $M_k$ is maximally monotone and strongly monotone w.r.t.\ $S$, which implies maximal monotonicity and strong monotonicity w.r.t.\ $\Id$ as well.
		The kernel has full domain, $\dom M_k = \Hprim$, so the sum $M_k + A$ is maximally monotone and strongly monotone with $\ran (M_k+A) = \Hprim$ and hence $\dom (M_k + A)^{-1} = \Hprim$ \cite[Corollary 25.28]{bauschkeConvexAnalysisMonotone2017}.
		Since $M_k + A$ is strongly monotone, $(M_k+A)^{-1}$ is cocoercive and hence Lipschitz continuous and single-valued \cite[Example 22.7]{bauschkeConvexAnalysisMonotone2017}.
	\end{proof}
\end{prop}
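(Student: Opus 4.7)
The plan is to reduce the claim to two observations: (a) the corrector update $u_{k+1}$ involves only the single-valued, everywhere-defined maps $M_k$ and $S$, so it poses no issue; and (b) for the primary update, it suffices to show that the warped resolvent $(M_k + A)^{-1}$ is both single-valued and has full domain, since $C$ and $M_k$ are single-valued with $\dom C = \dom M_k = \Hprim$. The entire proof then hinges on combining \cref{prop:kernel-properties} with standard properties of maximally monotone operators.

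First I would invoke \cref{prop:kernel-properties} to conclude that $M_k$ is maximally monotone and strongly monotone w.r.t.\ $S$. Since the unscaled and scaled norms are equivalent, strong monotonicity w.r.t.\ $S$ transfers to ordinary strong monotonicity (with a possibly different constant), and the same holds for $M_k + A$. Next, noting that the Lipschitz continuity from \cref{prop:kernel-properties} forces $\dom M_k = \Hprim$, I would use a standard domain-condition sum theorem (e.g., \cite[Corollary 25.5]{bauschkeConvexAnalysisMonotone2017}) to conclude that $M_k + A$ is maximally monotone.

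Having established that $M_k + A$ is maximally monotone and strongly monotone, the result that every maximally monotone, strongly monotone operator is surjective (e.g., \cite[Corollary 23.37]{bauschkeConvexAnalysisMonotone2017} or the cited \cite[Corollary 25.28]{bauschkeConvexAnalysisMonotone2017}) yields $\ran(M_k + A) = \Hprim$, i.e., $\dom (M_k + A)^{-1} = \Hprim$. Strong monotonicity of $M_k + A$ simultaneously gives that $(M_k + A)^{-1}$ is single-valued (in fact cocoercive, hence Lipschitz), so the $x_{k+1}$ update produces a unique element of $\Hprim$ given the current iterate.

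Finally, I would close by an induction on $k$: starting from the prescribed $x_0, u_0 \in \Hprim$, the argument above shows that $x_{k+1} \in \Hprim$ is uniquely determined, after which $u_{k+1}$ is immediately given as a difference of values of the single-valued map $\gamma_k M_k - S$. Thus both sequences are well-defined, infinite, and uniquely determined by $x_0$ and $u_0$. There is no real obstacle here beyond correctly citing the sum and surjectivity results; the proof is essentially bookkeeping on top of \cref{prop:kernel-properties}.
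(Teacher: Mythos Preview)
Your proposal is correct and follows essentially the same route as the paper: reduce to showing $(M_k+A)^{-1}$ is single-valued with full domain, use \cref{prop:kernel-properties} to get maximal and strong monotonicity of $M_k$ (transferred from $S$ to $\Id$), then apply the sum rule and surjectivity of strongly monotone maximally monotone operators. The paper's proof is slightly terser and omits the explicit induction and the separate remark about $u_{k+1}$, but the logical structure is identical.
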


\begin{lem}\label{lem:lyapunov}
	Let $z \in \zer (A + C)$ and let \cref{ass:prob-assump,ass:alg-assump} hold with $L_k < 1$ for all $k\in\N$.
	Then \cref{alg:nofob_mom} satisfies
	\begin{equation}\label{eq:lyapunov-inequality}
		(1 - L_{k-1} - L_k - \tfrac{\gamma_k\ell}{2} )\norm{ x_{k+1} - x_k}_{S}^2 \leq \mathcal{V}_{k}  - \mathcal{V}_{k+1}
	\end{equation}
	for all $k\in\N_+$ where
	\begin{align*}
		\mathcal{V}_k = \norm{x_k + S^{-1}u_k - z }_{S}^2 + (1-L_{k-1})L_{k-1}\norm{ x_{k} - x_{k-1} }_{S}^2.
	\end{align*}

	\begin{proof}
		By \cref{prop:alg-well-posed} we have that sequences $(x_k)_{k\in\N}$ and $(u_k)_{k\in\N}$ are well-defined, which implies that all quantities of the lemma are well-defined.
		Let $k \in \N_+$ be arbitrary.
		From \cref{alg:nofob_mom} we know that
		\begin{align*}
			&x_{k+1} = (M_k+A)^{-1}(M_kx_k-Cx_k+\gamma_k^{-1}u_k).
		\end{align*}
		Using the definition of $(M_k + A)^{-1}$, multiplying with $\gamma_k$ and rearranging yields
		\begin{align*}
			Sx_{k} - Sx_{k+1} + u_{k} - u_{k+1} - \gamma_{k}Cx_k \in \gamma_{k}A x_{k+1}.
		\end{align*}
		Since $z \in \zer (A+C$), we have $-Cz \in Az$.
		Using monotonicity of $\gamma_k A$ and multiplying by $2$ gives
		\begin{align*}
			0
			&\leq 2\inprod{Sx_{k} - Sx_{k+1} + u_{k} - u_{k+1} - \gamma_{k}Cx_k + \gamma_{k}Cz}{x_{k+1} - z} \\
			&= 2\inprod{Sx_{k} + u_{k} - (Sx_{k+1} + u_{k+1})}{x_{k+1} - z} - 2\gamma_k\inprod{Cx_k - Cz}{x_{k+1} - z}.
		\end{align*}
Applying \cref{eq:coco-three-point} on the last term gives
		\begin{align*}
			0 \leq 2\inprod{\xi_{k} - \xi_{k+1}}{x_{k+1} - z}_{S} + \tfrac{\gamma_k\ell}{2}\norm{x_{k+1} - x_k}_{S}^2
		\end{align*}
		where we have set $\xi_k \coloneqq x_k + S^{-1}u_k$.
		Applying \eqref{eq:inprod-diff-eq} to the inner product with $a = \xi_{k}$, $b = \xi_{k+1}$, $c = z$, $d = x_{k+1}$
		yields
		\begin{equation}\label{eq:proof-midstep}
			\begin{aligned}
				0
				&\leq \norm{\xi_k - z}_{S}^2 - \norm{\xi_{k+1} - z}_{S}^2 + \tfrac{\gamma_k\ell}{2}\norm{x_{k+1} - x_k}_{S}^2 \\
				&\quad- \norm{\xi_{k} - x_{k+1}}_{S}^2 + \norm{\xi_{k+1} - x_{k+1}}_{S}^2 \\
				&= \norm{\xi_k - z}_{S}^2 - \norm{\xi_{k+1} - z}_{S}^2 + \tfrac{\gamma_k\ell}{2}\norm{x_{k+1} - x_k}_{S}^2 \\
				&\quad- \norm{S^{-1}u_k - (x_{k+1} - x_{k}) }_{S}^2 + \norm{u_{k+1}}_{S^{-1}}^2.
			\end{aligned}
		\end{equation}
		We can expand the second to last norm, assume $L_{k-1} > 0$ and use Young's inequality to get
		\begin{align*}
			\norm{S^{-1}u_k - (x_{k+1} - x_{k}) }_{S}^2
			&= \norm{u_k}_{S^{-1}}^2 - 2\inprod{u_k}{x_{k+1}-x_k} + \norm{x_{k+1}-x_{k}}_{S}^2 \\
			&\geq - (L_{k-1}^{-1} - 1)\norm{u_k}_{S^{-1}}^2 + (1-L_{k-1})\norm{x_{k+1}-x_{k}}_{S}^2.
		\end{align*}
		By definition we have $u_k = (\gamma_{k-1}M_{k-1}-S)x_k - (\gamma_{k-1}M_{k-1}-S)x_{k-1}$ which yields
		\begin{align*}
			&\norm{S^{-1}u_k + (x_{k} - x_{k+1}) }_{S}^2 \\
			&\quad\geq -(1-L_{k-1})L_{k-1}\norm{x_{k} - x_{k-1}}_{S}^2 + (1-L_{k-1})\norm{x_{k+1}-x_{k}}_{S}^2
		\end{align*}
		since $\gamma_{k-1}M_{k-1}-S$ is $L_{k-1}$-Lipschitz continuous w.r.t.\ $S$ with $L_{k-1} < 1$.
		We also note that this inequality holds when $L_{k-1} = 0$ since $u_k = 0$ in that case.

		Inserting this back into \cref{eq:proof-midstep} and using Lipschitz continuity of $\gamma_kM_k-S$ on the last term yield
		\begin{align*}
			0
			&\leq \norm{\xi_k - z}_{S}^2 - \norm{\xi_{k+1} - z}_{S}^2 + \tfrac{\gamma_k\ell}{2}\norm{x_{k+1} - x_k}_{S}^2 \\
			&\quad+ (1-L_{k-1})L_{k-1}\norm{x_{k} - x_{k-1}}_{S}^2 - (1-L_{k-1})\norm{x_{k+1}-x_{k}}_{S}^2 \\
			&\quad+ L_{k}^2\norm{x_{k+1} - x_k}_{S}^2 \\
			&= \norm{\xi_k - z}_{S}^2 + (1-L_{k-1})L_{k-1}\norm{x_{k} - x_{k-1}}_{S}^2 \\
			&\quad - \norm{\xi_{k+1} - z}_{S}^2 - (1-L_{k})L_{k}\norm{x_{k+1} - x_{k}}_{S}^2 \\
			&\quad - (1-L_{k-1} - L_{k} - \tfrac{\gamma_k\ell}{2})\norm{x_{k+1}-x_{k}}_{S}^2.
		\end{align*}
		Rearranging this expression gives the inequality of the lemma.
	\end{proof}
\end{lem}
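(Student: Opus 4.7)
The plan is to unfold the resolvent step into an inclusion for $A$, pair it with $z$ via monotonicity, and convert every inner product into $S$-weighted squared norms matching the Lyapunov quantity $\mathcal{V}_k$. First I would rewrite the definition of $x_{k+1}$ as the inclusion $M_k x_k - M_k x_{k+1} - Cx_k + \gamma_k^{-1} u_k \in A x_{k+1}$, and use the update rule $u_{k+1} = (\gamma_k M_k - S)x_{k+1} - (\gamma_k M_k - S)x_k$ to substitute $\gamma_k(M_k x_k - M_k x_{k+1}) = Sx_k - Sx_{k+1} - u_{k+1}$. Combining with $-Cz \in Az$ through monotonicity of $\gamma_k A$ and introducing the corrected iterate $\xi_k \coloneqq x_k + S^{-1}u_k$, this yields
\[
0 \leq 2\inprod{\xi_k - \xi_{k+1}}{x_{k+1}-z}_S - 2\gamma_k\inprod{Cx_k - Cz}{x_{k+1}-z}.
\]

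Next I would handle the cocoercive term on the right using the three-point inequality \cref{eq:coco-three-point}, which contributes $\tfrac{\gamma_k\ell}{2}\norm{x_{k+1}-x_k}_S^2$, and expand the remaining inner product via identity \cref{eq:inprod-diff-eq} with $a = \xi_k$, $b = \xi_{k+1}$, $c = z$, $d = x_{k+1}$. Since $\xi_{k+1} - x_{k+1} = S^{-1}u_{k+1}$, this produces
\[
0 \leq \norm{\xi_k - z}_S^2 - \norm{\xi_{k+1} - z}_S^2 + \tfrac{\gamma_k\ell}{2}\norm{x_{k+1}-x_k}_S^2 - \norm{S^{-1}u_k - (x_{k+1}-x_k)}_S^2 + \norm{u_{k+1}}_{S^{-1}}^2.
\]

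The main technical step is converting the two $u$-dependent norm terms into the telescoping residual $(1-L_{k-1})L_{k-1}\norm{x_k - x_{k-1}}_S^2$ carried by $\mathcal{V}_k$ plus a leftover coefficient of $\norm{x_{k+1}-x_k}_S^2$. For $\norm{u_{k+1}}_{S^{-1}}^2$, Lipschitz continuity of $\gamma_k M_k - S$ from \cref{ass:alg-assump} directly gives the bound $L_k^2\norm{x_{k+1}-x_k}_S^2$. For the $u_k$ term I would expand the square and apply Young's inequality to the cross term $-2\inprod{u_k}{x_{k+1}-x_k}$, choosing the parameter so that the coefficient on $\norm{x_{k+1}-x_k}_S^2$ becomes $1 - L_{k-1}$; the matching coefficient on $\norm{u_k}_{S^{-1}}^2$ is $L_{k-1}^{-1} - 1$, which cancels one factor of $L_{k-1}$ against the Lipschitz bound $\norm{u_k}_{S^{-1}}^2 \leq L_{k-1}^2 \norm{x_k - x_{k-1}}_S^2$, producing exactly the residual required for telescoping.

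Finally I would collect the coefficients of $\norm{x_{k+1}-x_k}_S^2$, which sum to $\tfrac{\gamma_k\ell}{2} - (1-L_{k-1}) + L_k^2$, and rewrite them algebraically as $-(1-L_k)L_k - \left(1 - L_{k-1} - L_k - \tfrac{\gamma_k\ell}{2}\right)$. The first piece combines with $-\norm{\xi_{k+1}-z}_S^2$ to form $-\mathcal{V}_{k+1}$, and rearranging yields the stated inequality. The degenerate cases ($C = 0$ forcing $\ell = 0$, and $L_{k-1} = 0$ forcing $u_k = 0$) would be handled separately but only simplify the argument. The main obstacle is the bookkeeping in the Young's inequality step: the parameter must be chosen exactly so that the $u_k$-residual matches the telescoping shape of $\mathcal{V}_k$; any other choice yields an inequality that is either looser on the left-hand side or fails to telescope cleanly.
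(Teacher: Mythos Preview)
Your proposal is correct and follows essentially the same route as the paper: the same inclusion from the resolvent step, the same use of monotonicity and the three-point cocoercivity inequality \cref{eq:coco-three-point}, the same application of \cref{eq:inprod-diff-eq} with $a=\xi_k$, $b=\xi_{k+1}$, $c=z$, $d=x_{k+1}$, and the same Young's inequality with parameter $L_{k-1}$ to make the $u_k$-residual telescope. Even your treatment of the degenerate cases $C=0$ and $L_{k-1}=0$ mirrors the paper's.
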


\begin{thm}\label{thm:main-conv}
	Let \cref{ass:prob-assump,ass:alg-assump} hold.
	If there exists an $\epsilon > 0$ such that
	\begin{equation}\label{eq:thm:conv-cond}
		1 - L_{k-1} - L_k - \tfrac{\gamma_k \ell}{2} \geq \epsilon
	\end{equation}
	for all $k\in\N_+$, then \cref{alg:nofob_mom} satisfies the following as $k\to\infty$:
	\begin{enumerate}[(i)]
		\itemsep0em
		\item\label{thm:itm:stepconv} $x_{k+1} - x_k \to 0$,
		\item\label{thm:itm:momconv} $u_k \to 0$,
		\item\label{thm:itm:resconv} $(A+C)x_{k+1} \ni M_kx_k - M_kx_{k+1} + \gamma_k^{-1}u_{k} + Cx_{k+1} -  Cx_{k} \to 0$,
		\item\label{thm:itm:weakconv} $x_{k} \rightharpoonup x^\star$ for some $x^\star \in \zer (A+C)$.
	\end{enumerate}
	\begin{proof}
		Let $z\in\zer (A+C)$.
		Applying \cref{lem:lyapunov} and adding the inequality \cref{eq:lyapunov-inequality} for $k=1,\dots,n$ yields
		\begin{align*}
			\sum_{k=1}^n (1 - L_{k-1} - L_k - \tfrac{\gamma_k\ell}{2} )\norm{x_{k+1} - x_k}_{S}^2 \leq \mathcal{V}_1 - \mathcal{V}_{n+1} < \mathcal{V}_1 < \infty.
		\end{align*}
		The second to last inequality holds since $0 \leq L_k < 1$ for all $k\in\N$ by the assumptions and the condition \cref{eq:thm:conv-cond} of the theorem and therefore is $\mathcal{V}_{n+1}$ nonnegative.
		Item \cref{thm:itm:stepconv} follows from letting $n\to\infty$ since $(1 - L_{k-1} - L_k -\tfrac{\gamma_k\ell}{2}) \geq \epsilon > 0$ for all $k\in\N_+$ by the condition of the theorem.
		Item \cref{thm:itm:momconv} follows from \cref{thm:itm:stepconv}, the definition of $u_k$, and from the $L_k$-Lipschitz continuity of $\gamma_kM_k-S$ where $L_k < 1$ for all $k\in\N$.

		Let $k \in \N$.
		For \cref{thm:itm:resconv}, we first note from the nonlinear forward-backward step in \cref{alg:nofob_mom} that
		\begin{align*}
			Ax_{k+1} \ni M_kx_k - M_kx_{k+1} + \gamma_k^{-1}u_{k} - Cx_{k},
		\end{align*}
		which, by adding $Cx_{k+1}$ to both sides, gives
		\begin{align*}
			(A+C)x_{k+1} \ni M_kx_k - M_kx_{k+1} + \gamma_k^{-1}u_{k} + Cx_{k+1} - Cx_{k}.
		\end{align*}
		The result then follows from \cref{thm:itm:stepconv,thm:itm:momconv} since for all $k\in\N$, $\gamma_k > \gamma$ and $M_k$ and $C$ are Lipschitz continuous w.r.t.\ $S$ with constants $2\gamma^{-1}$ and $\ell$ respectively, see \cref{prop:kernel-properties,ass:prob-assump}.

		Since $A+C$ is maximally monotone, \cref{thm:itm:resconv} implies that all weak sequential cluster points of $(x_k)_{k\in\N}$ belong to $\zer (A+C)$ due to weak-strong sequential closedness of graphs of maximal monotone operators \cite[Proposition 20.38]{bauschkeConvexAnalysisMonotone2017}.
		To show the weak convergence result in \cref{thm:itm:weakconv}, in view of \cite[Lemma 2.47]{bauschkeConvexAnalysisMonotone2017}, it is enough to show that $(\norm{x_k - z}_S)_{k\in\N}$ converges for all $z\in\zer (A+C)$. The proof of \cite[Lemma 2.47]{bauschkeConvexAnalysisMonotone2017} actually only covers the case when $(\norm{x_k-z})_{k\in\N}$ converges but the generalization is straightforward.

		For any $z\in\zer (A+C)$, \cref{lem:lyapunov} and the condition $(1 - L_{k-1} - L_k -\tfrac{\gamma_k\ell}{2}) \geq \epsilon > 0$ give that $(\mathcal{V}_k)_{k\in\N_+}$ is a nonincreasing nonnegative sequence which therefore converges, say, $\mathcal{V}_k \to \nu$.
		This convergence implies
		\begin{align*}
			\norm{x_k + S^{-1}u_k - z}_S^2 = \mathcal{V}_k - (1-L_{k-1})L_{k-1}\norm{x_{k} - x_{k-1}}_S^2 \to \nu
		\end{align*}
		due to \cref{thm:itm:stepconv} and $0 \leq L_{k-1} < 1$.
		The sequence $\{x_k + S^{-1}u_k - z\}_{k\in\N}$ is then bounded, which, together with \cref{thm:itm:momconv}, yields
		\begin{align*}
			\norm{x_k - z}_S^2
			&= \norm{(x_k + S^{-1}u_k - z) - S^{-1}u_k}_S^2 \\
			&= \norm{x_k + S^{-1}u_k - z}_S^2 + \norm{u_k}_{S^{-1}}^2 - 2\inprod{u_k}{x_k + S^{-1}u_k - z} \to \nu
		\end{align*}
		which concludes the proof of \cref{thm:itm:weakconv}.
	\end{proof}
\end{thm}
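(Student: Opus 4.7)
My plan is to leverage \cref{lem:lyapunov} as a Fej\'er-like monotonicity result for the Lyapunov quantity $\mathcal{V}_k$ and then deduce each item in order. First I would fix an arbitrary $z \in \zer(A+C)$ and telescope the inequality \cref{eq:lyapunov-inequality} from $k=1$ to $n$. Since condition \cref{eq:thm:conv-cond} forces the coefficient on the left-hand side to be at least $\epsilon > 0$ and $\mathcal{V}_{n+1} \geq 0$ (because $0 \leq L_k < 1$ is guaranteed by \cref{ass:alg-assump} together with \cref{eq:thm:conv-cond}), the partial sums are uniformly bounded by $\mathcal{V}_1$. Letting $n \to \infty$ gives $\sum_{k} \norm{x_{k+1} - x_k}_S^2 < \infty$, from which \cref{thm:itm:stepconv} follows.

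Next, from the definition $u_k = (\gamma_{k-1}M_{k-1} - S)x_k - (\gamma_{k-1}M_{k-1} - S)x_{k-1}$ and the $L_{k-1}$-Lipschitz continuity of $\gamma_{k-1}M_{k-1} - S$ w.r.t.\ $S$, I would get $\norm{u_k}_{S^{-1}} \leq L_{k-1}\norm{x_k - x_{k-1}}_S$, so \cref{thm:itm:stepconv} immediately yields \cref{thm:itm:momconv}. For \cref{thm:itm:resconv}, rearranging the nonlinear forward-backward step in \cref{alg:nofob_mom} produces $Ax_{k+1} \ni M_kx_k - M_kx_{k+1} + \gamma_k^{-1}u_k - Cx_k$, and adding $Cx_{k+1}$ on both sides gives the displayed element of $(A+C)x_{k+1}$. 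By \cref{prop:kernel-properties} the kernel $M_k$ is $2\gamma^{-1}$-Lipschitz w.r.t.\ $S$, and $C$ is $\ell$-Lipschitz w.r.t.\ $S$ by \cref{ass:prob-assump}; combining this with $\gamma_k \geq \gamma$ and \cref{thm:itm:stepconv,thm:itm:momconv} forces this element to $0$.

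The delicate piece is the weak convergence in \cref{thm:itm:weakconv}, which I would handle via a scaled-norm Opial argument. The weak-strong closedness of the graph of the maximally monotone operator $A + C$ together with \cref{thm:itm:resconv} ensures that every weak sequential cluster point of $(x_k)_{k\in\N}$ lies in $\zer(A+C)$. It then remains to show that $\norm{x_k - z}_S$ converges for each such $z$. Since $\mathcal{V}_k$ is nonincreasing and nonnegative by \cref{lem:lyapunov} and \cref{eq:thm:conv-cond}, it converges to some $\nu \geq 0$; combined with \cref{thm:itm:stepconv} and $(1 - L_{k-1})L_{k-1} \in [0, 1/4]$ this forces $\norm{x_k + S^{-1}u_k - z}_S^2 \to \nu$. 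Expanding the square and invoking \cref{thm:itm:momconv} together with boundedness of $x_k + S^{-1}u_k - z$ makes the cross term and $\norm{u_k}_{S^{-1}}^2$ vanish, so $\norm{x_k - z}_S^2 \to \nu$. A scaled version of \cite[Lemma~2.47]{bauschkeConvexAnalysisMonotone2017} then delivers a unique weak sequential limit in $\zer(A+C)$.

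The main obstacle I anticipate is the bookkeeping around the scaled geometry in this final step: the Lyapunov function is expressed in $\norm{\cdot}_S$, whereas the correction $u_k$ naturally lives in the dual scaling $\norm{\cdot}_{S^{-1}}$, and one must also verify that the proof of \cite[Lemma~2.47]{bauschkeConvexAnalysisMonotone2017} extends verbatim to an equivalent inner product. All other items reduce to telescoping and applying the Lipschitz bounds on $\gamma_kM_k - S$, $M_k$, and $C$ that are already at our disposal.
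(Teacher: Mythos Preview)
Your proposal is correct and follows essentially the same route as the paper's own proof: telescope \cref{lem:lyapunov} to obtain \cref{thm:itm:stepconv}, deduce \cref{thm:itm:momconv} from the Lipschitz bound on $\gamma_{k-1}M_{k-1}-S$, derive \cref{thm:itm:resconv} by rewriting the resolvent step and invoking \cref{prop:kernel-properties}, and finish \cref{thm:itm:weakconv} via weak--strong closedness together with the scaled Opial argument based on the convergence of $\mathcal{V}_k$. Even the anticipated obstacle you flag---extending \cite[Lemma~2.47]{bauschkeConvexAnalysisMonotone2017} to the $\norm{\cdot}_S$ geometry---is exactly the point the paper singles out for comment.
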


\section{Explicit Iterate Momentum}\label{sec:add-momentum}
Consider the following variant of \cref{alg:nofob_mom} that adds an additional scaled momentum term $\gamma_k^{-1}\theta S(x_{k}-x_{k-1})$.
\begin{algorithm}[H]
	\caption{Nonlinear Forward-Backward with Momentum Correction and Additional Iterate Momentum}
	\label{alg:nofob_mom_add_mom}
	Consider problem \cref{eq:prob} and let $S$ be such that \cref{ass:prob-assump} is satisfied.
	With $x_0, x_{-1}, u_0 \in \Hprim$, for all $k\in\N$ iteratively perform
	\begin{align*}
		x_{k+1}
		&=(M_k + A)^{-1}(M_kx_k - Cx_k + \gamma_k^{-1}u_k + \gamma_k^{-1}\theta S(x_{k} - x_{k-1})), \\
		u_{k+1} &= (\gamma_k M_{k} - S)x_{k+1} - (\gamma_k M_{k} - S)x_{k},
	\end{align*}
	where $M_k\colon\Hprim\to\Hprim$, $\gamma_k > 0$ and $\theta < 1$.
\end{algorithm}
We will show in \cref{cor:add-mom-conv} that there always exists a $\theta \neq 0$---possibly negative---such that if \cref{alg:nofob_mom} converges, so does \cref{alg:nofob_mom_add_mom}.
This shows that it is always possible to add this type of iterate momentum to an instance of \cref{alg:nofob_mom}.
We will use this in the next section to develop a new momentum variant of the Forward-Half-Reflected-Backward method.
Although it might seem like \cref{alg:nofob_mom_add_mom} has more degrees of freedom than \cref{alg:nofob_mom}, this is not the case.
In fact, \cref{alg:nofob_mom_add_mom} is equivalent to \cref{alg:nofob_mom}---we show and use this in the proofs below.
\Cref{alg:nofob_mom_add_mom} is therefore first and foremost a tool for adding momentum to an already known instance of \cref{alg:nofob_mom} and the usefulness comes via the following corollary that gives an explicit convergence condition.
\begin{cor}\label{cor:add-mom-conv}
	Let \cref{ass:prob-assump,ass:alg-assump} hold and let $\theta < 1$.
	If there exists an $\varepsilon > 0$ such that
	\begin{equation}\label{eq:cor:add-mom-cond}
		1 - \theta - 2|\theta| - L_{k-1} - L_{k} - \gamma_k \tfrac{\ell}{2} \geq \varepsilon
	\end{equation}
	for all $k\in\N_+$, then \cref{alg:nofob_mom_add_mom} satisfies the following as $k\to\infty$:
	\begin{enumerate}[(i)]
		\itemsep0em
		\item $x_{k+1} - x_k \to 0$,
		\item $u_k \to 0$,
		\item $(A+C)x_{k+1} \ni M_kx_k - M_kx_{k+1} + \gamma_k^{-1}u_{k} + \gamma_k^{-1}\theta S(x_k-x_{k-1}) + Cx_{k+1} -  Cx_{k} \to 0$,
		\item $x_{k} \rightharpoonup x^\star$ for some $x^\star \in \zer (A+C)$.
	\end{enumerate}
	\begin{proof}
		By defining $\hat{\gamma}_k = \frac{\gamma_k}{1-\theta}$ and $\hat{u}_{k+1} = \frac{1}{1-\theta}u_{k+1} + \frac{\theta}{1-\theta}S(x_{k+1} - x_{k})$, the update of \cref{alg:nofob_mom_add_mom} can equivalently be written as
		\begin{equation}\label{eq:add_mom_equivalent}
			\begin{aligned}
				x_{k+1} &=(M_k + A)^{-1}(M_kx_k - Cx_k + \hat{\gamma}_k^{-1}\hat{u}_k), \\
				\hat{u}_{k+1} &= (\hat{\gamma}_k M_{k} - S)x_{k+1} - (\hat{\gamma}_k M_{k} - S)x_{k}
			\end{aligned}
		\end{equation}
		which is the same as the update of \cref{alg:nofob_mom} but with $\hat{\gamma}_k$ and $\hat{u}_k$ instead of $\gamma_k$ and $u_k$ respectively.
		\Cref{alg:nofob_mom_add_mom} is therefore equivalent to \cref{alg:nofob_mom}.
		Since, by \cref{ass:alg-assump}, $\gamma_kM_k-S$ is $L_k$-Lipschitz w.r.t.\ $S$ and
		\begin{align*}
			\hat{\gamma}_kM_k - S = \tfrac{1}{1-\theta}(\gamma_kM_k - S) + \tfrac{\theta}{1-\theta}S
		\end{align*}
		we conclude that $\hat{\gamma}_kM_k - S$ is $\frac{L_k + |\theta|}{1-\theta}$-Lipschitz continuous w.r.t.\ $S$.
		We further have that $\hat{\gamma}_k = \frac{\gamma_k}{1-\theta} \geq \frac{\gamma}{1-\theta} >  0$ and \cref{ass:alg-assump} is therefore satisfied for \cref{eq:add_mom_equivalent}.
		The convergence condition \cref{eq:thm:conv-cond} from \cref{thm:main-conv} for the algorithm update \cref{eq:add_mom_equivalent} is then that there exists an $\epsilon > 0$ such that
		\begin{gather*}
			1 - \tfrac{L_{k-1} + |\theta|}{1-\theta} - \tfrac{L_{k} + |\theta|}{1-\theta} - \tfrac{\gamma_k}{1-\theta} \tfrac{\ell}{2} \geq \epsilon.
		\end{gather*}
		Multiplication of both sides by $1 - \theta$ and noting that $\theta < 1$ gives the equivalent condition that there exists an $\varepsilon > 0$ such that
		\begin{gather*}
			1 - \theta - 2|\theta| - L_{k-1} - L_{k} - \gamma_k \tfrac{\ell}{2} \geq \varepsilon.
		\end{gather*}
		The convergence results for \cref{alg:nofob_mom_add_mom} follow directly from \cref{thm:main-conv}.
	\end{proof}
\end{cor}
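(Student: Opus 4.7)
My plan is to reduce the convergence of \cref{alg:nofob_mom_add_mom} to \cref{thm:main-conv} by exhibiting \cref{alg:nofob_mom_add_mom} as a reparameterization of \cref{alg:nofob_mom}. Since $\theta < 1$ we have $1 - \theta > 0$, so I would set $\hat{\gamma}_k = \gamma_k/(1-\theta)$ and look for a new auxiliary sequence $\hat{u}_k$ for which the additional momentum term $\gamma_k^{-1}\theta S(x_k - x_{k-1})$ is absorbed into the correction. Matching the forward step with \cref{alg:nofob_mom} forces $\hat{u}_k = \tfrac{1}{1-\theta}(u_k + \theta S(x_k - x_{k-1}))$, and a direct computation would verify that with this definition the recurrence $\hat{u}_{k+1} = (\hat{\gamma}_k M_k - S)(x_{k+1}-x_k)$ holds, because $\hat{\gamma}_k M_k - S = \tfrac{1}{1-\theta}(\gamma_k M_k - S) + \tfrac{\theta}{1-\theta}S$.

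Next I would verify that \cref{ass:alg-assump} is inherited by the reparameterized system. The lower bound $\hat{\gamma}_k \geq \gamma/(1-\theta) > 0$ is immediate. For the Lipschitz bound of $\hat{\gamma}_k M_k - S$, I would invoke the decomposition above together with the fact that $S$ is $1$-Lipschitz continuous w.r.t.\ $S$ (since $\norm{Sx}_{S^{-1}} = \norm{x}_S$) and the hypothesis that $\gamma_k M_k - S$ is $L_k$-Lipschitz w.r.t.\ $S$; the triangle inequality yields a Lipschitz constant of $\hat{L}_k = (L_k + |\theta|)/(1-\theta)$.

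Having verified the hypotheses, I would apply \cref{thm:main-conv} to the $(\hat{\gamma}_k,\hat{u}_k)$ system. The theorem's convergence condition reads $1 - \hat{L}_{k-1} - \hat{L}_k - \hat{\gamma}_k\ell/2 \geq \epsilon$, and multiplying through by the positive quantity $1 - \theta$ converts this exactly into the stated condition $1 - \theta - 2|\theta| - L_{k-1} - L_k - \gamma_k \ell/2 \geq \varepsilon$ with $\varepsilon = (1-\theta)\epsilon$. The four conclusions of \cref{thm:main-conv} then transfer: items (i) and (iv) are identical in both formulations, while items (ii) and (iii) translate by undoing the substitution $\hat{u}_k = \tfrac{1}{1-\theta}u_k + \tfrac{\theta}{1-\theta}S(x_k - x_{k-1})$. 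Since $x_{k+1} - x_k \to 0$ from (i), $\hat{u}_k \to 0$ implies $u_k \to 0$, giving (ii); and the residual identity of (iii) follows by substituting the extra momentum term back into the forward-backward inclusion.

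The only real obstacle is the bookkeeping of the substitution, in particular getting the constants in the Lipschitz bound right and checking that the seemingly richer family of updates in \cref{alg:nofob_mom_add_mom} is genuinely the same family as in \cref{alg:nofob_mom}; once those identifications are made, the result is a direct corollary of \cref{thm:main-conv}.
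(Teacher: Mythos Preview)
Your proposal is correct and follows essentially the same approach as the paper: reparameterize via $\hat{\gamma}_k = \gamma_k/(1-\theta)$, absorb the momentum term into a new correction $\hat{u}_k = \tfrac{1}{1-\theta}u_k + \tfrac{\theta}{1-\theta}S(x_k-x_{k-1})$, use the decomposition $\hat{\gamma}_kM_k - S = \tfrac{1}{1-\theta}(\gamma_kM_k - S) + \tfrac{\theta}{1-\theta}S$ to obtain the Lipschitz constant $(L_k+|\theta|)/(1-\theta)$, and then invoke \cref{thm:main-conv} after multiplying the convergence condition by $1-\theta$. The bookkeeping you flag (verifying the recurrence for $\hat{u}_{k+1}$ and translating conclusions (ii) and (iii) back) is exactly what the paper does as well.
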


\begin{cor}\label{cor:add-mom-possible}
	If the conditions of \cref{thm:main-conv} hold---implying that \cref{alg:nofob_mom} converges to a solution of \cref{eq:prob}---there exists a $\theta\neq 0$ with $\theta < 1$ such that the conditions of \cref{cor:add-mom-conv} also hold and the additional momentum method in \cref{alg:nofob_mom_add_mom} converges to a solution of \cref{eq:prob}.
	\begin{proof}
		The assumptions on $A$, $C$, $S$, $M_k$, and $\gamma_k$ of \cref{thm:main-conv} and \cref{cor:add-mom-conv} are identical so it is enough to conclude that there exists a $\theta \neq 0$ and $\theta<1$ such that convergence condition \cref{eq:cor:add-mom-cond} of \cref{cor:add-mom-conv} is implied by the conditions of \cref{thm:main-conv}.
		Since \cref{thm:main-conv} holds, we know that
		\begin{gather*}
			1 - L_{k-1} - L_{k} - \gamma_k \tfrac{\ell}{2} \geq \epsilon > 0.
		\end{gather*}
		Since $\epsilon > 0$ there exist a $\theta$ such that $-\tfrac{1}{2}\epsilon < \theta < \tfrac{1}{6}\epsilon$, $\theta \neq 0$, and $\theta < 1$.
		Selecting such a $\theta$ yields $\tfrac{1}{2}\epsilon > \theta + 2|\theta| > 0$ and
		\begin{gather*}
			1 - L_{k-1} - L_{k} - \gamma_k \tfrac{\ell}{2} \geq \epsilon > \tfrac{1}{2}\epsilon + \theta + 2|\theta| > 0.
		\end{gather*}
		Subtracting $\theta + 2|\theta|$ and defining $\varepsilon = \tfrac{1}{2}\epsilon$ yield
		\begin{gather*}
			1 - \theta - 2|\theta| - L_{k-1} - L_{k} - \gamma_k \tfrac{\ell}{2} \geq \varepsilon > 0
		\end{gather*}
		which is the convergence condition \cref{eq:cor:add-mom-cond} for \cref{alg:nofob_mom_add_mom}.
	\end{proof}
\end{cor}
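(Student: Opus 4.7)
The plan is to observe that the convergence condition \cref{eq:cor:add-mom-cond} for \cref{alg:nofob_mom_add_mom} is obtained from the convergence condition \cref{eq:thm:conv-cond} for \cref{alg:nofob_mom} by subtracting the single extra quantity $\theta + 2|\theta|$, which does not depend on $k$. Since $\theta + 2|\theta| \to 0$ as $\theta \to 0$ (indeed, it equals $3\theta$ for $\theta \geq 0$ and $-\theta = |\theta|$ for $\theta < 0$), we can absorb it into the existing slack $\epsilon > 0$ inherited from \cref{thm:main-conv} by picking $\theta$ sufficiently close to zero. The constraint $\theta < 1$ and the requirement $\theta \neq 0$ are automatically compatible with ``$\theta$ small enough'', so no tension arises.

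Concretely, the steps I would take are: First, note that since the assumptions on $A$, $C$, $S$, $M_k$, and $\gamma_k$ are identical in \cref{thm:main-conv} and \cref{cor:add-mom-conv}, only the convergence inequality needs to be checked. Second, invoke the hypothesis of \cref{thm:main-conv} to fix some $\epsilon > 0$ with $1 - L_{k-1} - L_k - \tfrac{\gamma_k \ell}{2} \geq \epsilon$ for all $k \in \N_+$. Third, select any nonzero $\theta$ satisfying $|\theta| < \tfrac{\epsilon}{6}$ and $\theta < 1$ (for instance $\theta$ in $(-\tfrac{\epsilon}{6}, 0) \cup (0, \tfrac{\epsilon}{6})$ intersected with $(-\infty,1)$, which is nonempty since $\epsilon > 0$). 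For such a $\theta$ we have $\theta + 2|\theta| \leq 3|\theta| < \tfrac{\epsilon}{2}$. Fourth, subtract $\theta + 2|\theta|$ from both sides of the inequality from \cref{thm:main-conv} to obtain
\begin{equation*}
1 - \theta - 2|\theta| - L_{k-1} - L_k - \tfrac{\gamma_k \ell}{2} \;\geq\; \epsilon - (\theta + 2|\theta|) \;>\; \tfrac{\epsilon}{2} \;>\; 0,
\end{equation*}
and set $\varepsilon = \tfrac{\epsilon}{2}$ to obtain \cref{eq:cor:add-mom-cond}. Finally, apply \cref{cor:add-mom-conv} to conclude convergence.

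I do not anticipate a real obstacle here; the argument is essentially a continuity/perturbation observation about the two convergence conditions. The only point requiring a moment's care is the piecewise definition of $\theta + 2|\theta|$, but the crude bound $\theta + 2|\theta| \leq 3|\theta|$ handles both signs uniformly, so there is no need for a case split. The nonemptiness of the range of admissible $\theta$ follows because $\epsilon > 0$ and the interval $(-\tfrac{\epsilon}{6}, \tfrac{\epsilon}{6}) \setminus \{0\}$ always contains points strictly less than $1$.
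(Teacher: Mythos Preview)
Your proposal is correct and follows essentially the same approach as the paper: both arguments note that the hypotheses of \cref{thm:main-conv} and \cref{cor:add-mom-conv} coincide, then choose $\theta$ small enough that $\theta+2|\theta|$ fits inside the slack $\epsilon$, and finally set $\varepsilon=\tfrac{\epsilon}{2}$. The only cosmetic difference is that the paper uses the asymmetric interval $-\tfrac{\epsilon}{2}<\theta<\tfrac{\epsilon}{6}$ (exploiting that $\theta+2|\theta|=|\theta|$ for $\theta<0$), whereas you use the symmetric bound $|\theta|<\tfrac{\epsilon}{6}$ via $\theta+2|\theta|\leq 3|\theta|$; both choices suffice for the existence claim.
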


\begin{rem}\label{rem:add-mom-tract}
	From \cref{cor:add-mom-possible}, we know that we can always add momentum to an instance of \cref{alg:nofob_mom} and still get a convergent algorithm.
	In most cases, the per iteration computational cost of the momentum variant is similar to that of the basic method.
	However, it is possible for the momentum variant not to be tractable.
	More precisely, it might not be possible to cheaply evaluate $(M_k + A)^{-1}$ at $M_kx_k - Cx_k + \gamma_k^{-1}u_k + \gamma_k^{-1}\theta S(x_{k} - x_{k-1})$ even though it can be cheaply evaluated at $M_kx_k - Cx_k + \gamma_k^{-1}u_k$.
	We will show an example of this in \cref{alg:primdual-res}.
	For \cref{alg:primdual-res}, this problem can be handled by introducing a $\theta$-dependent term in the nonlinear kernel.
\end{rem}

\remove{
\section{Forward-Half-Reflected-Backward Splitting}\label{sec:fhrb}
Two examples of existing algorithms that can be interpreted as instances of \cref{alg:nofob_mom} are the forward-half-reflected-backward (FHRB) method and its special case, the forward-reflected-backward (FRB) method\footnote{FHRB was referred to as a three-operator splitting variant of FRB in the original work.} \cite{malitskyForwardBackwardSplittingMethod2020}.
FHRB is a method for finding $x\in\Hprim$ such that
\begin{equation}\label{eq:fhrb-prob}
0 \in Bx + Dx + Cx
\end{equation}
for which the following assumption holds; FRB solves the same problem but with $C=0$.
\begin{ass}\label{ass:fhrb-prob-ass}
	The operators of \cref{eq:fhrb-prob} satisfy:
	\begin{enumerate}[(i)]
		\itemsep0em
		\item $B\colon\Hprim \to 2^\Hprim$ is maximally monotone.
		\item $D\colon\Hprim\to\Hprim$ is $\delta$-Lipschitz continuous.
		\item $B + D$ is maximally monotone.
		\item $C\colon\Hprim\to\Hprim$ is $\beta^{-1}$-cocoercive.
		\item $\zer (B+D+C) \neq \emptyset$.
	\end{enumerate}
If $C = 0$, we set $\beta = \beta^{-1} = 0$.
\end{ass}
It should be noted that in \cite{malitskyForwardBackwardSplittingMethod2020} was
 \Cref{ass:fhrb-prob-ass}\textit{(ii)} replaced with a monotonicity assumption on $D$.
This assumption implies \Cref{ass:fhrb-prob-ass}\textit{(ii)} since the sum $B+D$ is maximally monotone if $D$ is maximally monotone with full domain which is the case if $D$ is monotone and Lipschitz continuous.
However, our assumptions are slightly more general since we can allow for non-monotone $D$ as long as $B$ can compensate for it.

By letting $A = B+D$, problem \cref{eq:fhrb-prob} can be seen as an instance of our standard problem formulation \cref{eq:prob}.
If we in addition let $S=\Id$, \Cref{ass:fhrb-prob-ass} implies that \Cref{ass:prob-assump} holds with $\ell=\beta$.
With these choices, FHRB is obtained from \cref{alg:nofob_mom} by choosing $M_k = \alpha_k^{-1}\Id - D$ and $\gamma_k = \alpha_k$ for some step-size $\alpha_k > 0$.
The backward step of the algorithm becomes
\begin{align*}
	(M_k + A)^{-1}
	= (\alpha_k^{-1}\Id - D + B + D)^{-1}
	= (\Id + \alpha_k B)^{-1}\circ\alpha_k\Id.
\end{align*}
Note, the backward step is independent of $D$ and the algorithm will, as we will show next, only depend on $D$ through the forward step.
The operator $\gamma_kM_k-S$ used in the correction term becomes
\begin{align*}
	\gamma_kM_k-S = \alpha_k(\alpha_k^{-1}\Id - D) - \Id = -\alpha_kD,
\end{align*}
and the complete forward step with momentum correction is
\begin{align*}
	&M_kx_k - Cx_k + \gamma_k^{-1}u_k \\
	&\qquad = \alpha_k^{-1}x_k - Dx_k - Cx_k - \alpha_k^{-1}(\alpha_{k-1}Dx_k - \alpha_{k-1}Dx_{k-1}).
\end{align*}
Combining the backward and forward steps yields the full FHRB algorithm, see \cref{alg:fhrb}.
In this special case, we do not need to evaluate both $M_{k-1}x_{k}$ and $M_kx_k$ from scratch since we can reuse the potentially expensive computation of $Dx_k$.
\begin{algorithm}[H]
	\caption{Forward-Half-Reflected-Backward \cite{malitskyForwardBackwardSplittingMethod2020}}
	\label{alg:fhrb}
	Consider problem \cref{eq:fhrb-prob}.
	With $x_0, x_{-1} \in \Hprim$ and $\alpha_{-1} > 0$, for all $k\in\N$ iteratively perform
	\begin{align*}
		x_{k+1}
		&=(\Id+\alpha_kB)^{-1}(x_k - \alpha_kCx_k - (\alpha_k+\alpha_{k-1})Dx_k + \alpha_{k-1}Dx_{k-1})
	\end{align*}
	where $\alpha_k> 0$.
\end{algorithm}

\begin{cor}\label{cor:fhrb-conv}
	Let \cref{ass:fhrb-prob-ass} hold and consider problem \cref{eq:fhrb-prob} and \cref{alg:fhrb}.
	If there exists $\epsilon > 0$ such that
	\begin{align*}
		\epsilon \leq \alpha_k, \quad \alpha_k\delta + \alpha_{k+1}(\delta + \tfrac{\beta}{2}) \leq 1 - \epsilon
	\end{align*}
	for all $k\in\N$, then $x_k \rightharpoonup x^\star$ where $x^\star$ is a solution to \cref{eq:fhrb-prob}.
	\begin{proof}
		After \cref{ass:fhrb-prob-ass}, we concluded that \cref{ass:prob-assump} holds for the reformulation of \cref{eq:fhrb-prob} into \cref{eq:prob} via $A=B+D$.
		\Cref{ass:alg-assump} also holds since $\gamma_k = \alpha_k \geq \epsilon > 0$ and $\gamma_kM_k - S = -\alpha_k D$ is $\alpha_k\delta$-Lipschitz continuous.
		Inserting $\gamma_k$, $\beta$, and $\delta$ into \cref{eq:thm:conv-cond} of \cref{thm:main-conv} then directly gives the step-size condition and the results follow from the theorem.
	\end{proof}
\end{cor}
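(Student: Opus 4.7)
The plan is to reduce the corollary to \cref{thm:main-conv} using the reformulation already laid out in the discussion preceding the corollary statement: set $A=B+D$, $S=\Id$, and identify \cref{alg:fhrb} as the specialization of \cref{alg:nofob_mom} with kernel $M_k = \alpha_k^{-1}\Id - D$ and step-size $\gamma_k = \alpha_k$. With this identification, the proof amounts to verifying \cref{ass:prob-assump,ass:alg-assump} and the convergence condition \cref{eq:thm:conv-cond}.

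First I would verify \cref{ass:prob-assump}. Maximal monotonicity of $A = B+D$ is \cref{ass:fhrb-prob-ass}(iii); cocoercivity of $C$ with $\ell = \beta$ w.r.t.\ $S = \Id$ is \cref{ass:fhrb-prob-ass}(iv); nonemptiness of $\zer (A+C) = \zer (B+D+C)$ is \cref{ass:fhrb-prob-ass}(v). Next I would verify \cref{ass:alg-assump}: the lower bound $\gamma_k = \alpha_k \geq \epsilon > 0$ is immediate from the hypothesis, and the identity $\gamma_k M_k - S = -\alpha_k D$, combined with $\delta$-Lipschitz continuity of $D$ from \cref{ass:fhrb-prob-ass}(ii), gives that $\gamma_k M_k - S$ is $L_k$-Lipschitz with $L_k = \alpha_k\delta$.

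The core step is matching the step-size condition to \cref{eq:thm:conv-cond}. Substituting $L_k = \alpha_k\delta$, $\ell = \beta$, and $\gamma_k = \alpha_k$ into \cref{eq:thm:conv-cond} yields
\begin{align*}
    1 - \alpha_{k-1}\delta - \alpha_k\delta - \tfrac{\alpha_k \beta}{2} \geq \epsilon
    \quad\Longleftrightarrow\quad
    \alpha_{k-1}\delta + \alpha_k\bigl(\delta + \tfrac{\beta}{2}\bigr) \leq 1 - \epsilon,
\end{align*}
which is exactly the corollary's second hypothesis with index shifted by one; since the corollary imposes this for all $k \in \N$, it holds for all $k \in \N_+$ under the shift, as required by the theorem. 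I would also observe in passing that this condition forces $L_k = \alpha_k\delta \leq 1 - \epsilon < 1$ for every $k$, so the hypothesis $L_k < 1$ needed to invoke \cref{thm:main-conv} is automatic.

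The proof then closes by applying \cref{thm:main-conv}\cref{thm:itm:weakconv}, which yields the weak convergence of $(x_k)_{k\in\N}$ to some $x^\star \in \zer(A+C) = \zer(B+D+C)$, i.e., to a solution of \cref{eq:fhrb-prob}. There is no genuine obstacle here; the only mild subtlety is the index bookkeeping between the corollary's condition (stated for $k \in \N$) and the theorem's condition (stated for $k \in \N_+$), which is handled by the shift above.
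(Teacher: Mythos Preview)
Your proof is correct and follows exactly the same route as the paper's: verify \cref{ass:prob-assump,ass:alg-assump} for the choices $A=B+D$, $S=\Id$, $M_k=\alpha_k^{-1}\Id-D$, $\gamma_k=\alpha_k$, and then substitute $L_k=\alpha_k\delta$, $\ell=\beta$ into \cref{eq:thm:conv-cond}. Your treatment is in fact slightly more explicit than the paper's, which glosses over the index shift and the implied bound $L_k<1$ that you spell out.
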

These step-size conditions are slightly relaxed compared to the ones in the original work \cite{malitskyForwardBackwardSplittingMethod2020}.
Our conditions match these when a constant step-size $\alpha_k = \alpha$ is chosen.
However, the original work only provides convergence conditions for non-constant step-sizes in the FRB case, i.e., $C=0$.
In that case, \cite{malitskyForwardBackwardSplittingMethod2020} proved convergence if $\epsilon \leq 2\alpha_k \leq \delta^{-1} - \epsilon$ for some $\epsilon > 0$ and all $k\in\N$ which is slightly more restrictive than our condition.

\begin{rem}
	The same nonlinear kernel that in this case generates FHRB and FRB yields the forward-backward-half-forward \cite{briceno-ariasForwardBackwardHalfForwardAlgorithm2018} and forward-backward-forward \cite{tsengModifiedForwardBackwardSplitting2000} methods when used in the nonlinear forward-backward scheme with projection correction \cite{giselssonNonlinearForwardBackwardSplitting2021}.
	The two sets of algorithms can therefore be seen to have the same nonlinear forward-backward step but with different correction methods to guarantee convergence.
	Due to the momentum correction's reuse of old information, FHRB and FRB have cheaper per-iteration costs compared to the projection correction counterparts.
\end{rem}

\subsection{Forward-Half-Reflected-Backward with Momentum}
Consider again problem \cref{eq:fhrb-prob} and the operator choices that generated FHRB; $A = B + D$,  $M_k = \alpha_k^{-1}\Id - D$, $S = \Id$, and $\gamma_k = \alpha_k$.
Using these parameters in \cref{alg:nofob_mom_add_mom} gives the following momentum variant of FHRB.
\begin{algorithm}[H]
	\caption{Forward-Half-Reflected-Backward with Momentum}
	\label{alg:fhrb-mom}
	Consider problem \cref{eq:fhrb-prob}.
	With $x_0, x_{-1} \in \Hprim$ and $\alpha_{-1} > 0$, for all $k\in\N$ iteratively perform
	\begin{align*}
		\bar{x}_k &= x_k + \theta(x_{k}-x_{k-1}), \\
		x_{k+1} &=(\Id+\alpha_kB)^{-1}(\bar{x}_k - \alpha_kCx_k - (\alpha_k+\alpha_{k-1})Dx_k + \alpha_{k-1}Dx_{k-1} )
	\end{align*}
	where $\alpha_k > 0$ and $\theta < 1$.
\end{algorithm}
\begin{cor}\label{cor:fhrb-mom-conv}
	Let \cref{ass:fhrb-prob-ass} hold and consider problem \cref{eq:fhrb-prob} and \cref{alg:fhrb-mom}.
	If there exists $\epsilon > 0$ such that
	\begin{align*}
		\epsilon \leq \alpha_k ,\quad \alpha_{k}\delta + \alpha_{k+1}(\delta + \tfrac{\beta}{2}) \leq (1-\theta-2|\theta|) - \epsilon
	\end{align*}
	for all $k\in\N$, then $x_k \rightharpoonup x^\star$ where $x^\star$ is a solution to \cref{eq:fhrb-prob}.
	\begin{proof}
		The results follow from \cref{cor:add-mom-conv} analogously to how the results of \cref{cor:fhrb-conv} follow from \cref{thm:main-conv}.
	\end{proof}
\end{cor}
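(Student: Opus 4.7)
The plan is to directly invoke \cref{cor:add-mom-conv} after verifying that \cref{alg:fhrb-mom} is precisely the instance of \cref{alg:nofob_mom_add_mom} obtained from the parameter choices $A = B + D$, $S = \Id$, $\gamma_k = \alpha_k$, and $M_k = \alpha_k^{-1}\Id - D$ that were used earlier in this section to derive FHRB. The bulk of the work is algebraic bookkeeping: substituting these choices into \cref{alg:nofob_mom_add_mom} and checking that the resulting update coincides with \cref{alg:fhrb-mom}, and then translating the convergence condition \cref{eq:cor:add-mom-cond} into the step-size condition stated in the corollary.

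First, I would verify the problem-side assumptions. With $A = B + D$ and $S = \Id$, \cref{ass:fhrb-prob-ass} implies \cref{ass:prob-assump}, with cocoercivity constant $\ell = \beta$, exactly as observed in the derivation of \cref{alg:fhrb}. Next, I would check \cref{ass:alg-assump}: the lower bound $\gamma_k = \alpha_k \geq \epsilon > 0$ is given, and a short computation yields $\gamma_k M_k - S = -\alpha_k D$, which is $L_k = \alpha_k \delta$-Lipschitz continuous w.r.t.\ $S = \Id$ by \cref{ass:fhrb-prob-ass}. Both assumptions required by \cref{cor:add-mom-conv} are therefore satisfied.

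Next I would identify the update. Using $M_k + A = \alpha_k^{-1}\Id + B$, the resolvent in \cref{alg:nofob_mom_add_mom} becomes $(\Id + \alpha_k B)^{-1}\circ \alpha_k \Id$. For the argument of the resolvent, the momentum-correction term is $\gamma_k^{-1}u_k = -\alpha_k^{-1}\alpha_{k-1}(Dx_k - Dx_{k-1})$ and the additional momentum contributes $\gamma_k^{-1}\theta S(x_k - x_{k-1}) = \alpha_k^{-1}\theta(x_k - x_{k-1})$. Combining these with $M_k x_k - C x_k = \alpha_k^{-1}x_k - Dx_k - Cx_k$ and multiplying by $\alpha_k$ yields exactly the argument of $(\Id + \alpha_k B)^{-1}$ in \cref{alg:fhrb-mom} with $\bar{x}_k = x_k + \theta(x_k - x_{k-1})$.

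Finally, I would substitute $L_k = \alpha_k\delta$ and $\ell = \beta$ into \cref{eq:cor:add-mom-cond}, which gives $1 - \theta - 2|\theta| - \alpha_{k-1}\delta - \alpha_k\delta - \alpha_k\tfrac{\beta}{2} \geq \varepsilon$; after relabeling indices this is the condition $\alpha_{k}\delta + \alpha_{k+1}(\delta + \tfrac{\beta}{2}) \leq (1 - \theta - 2|\theta|) - \epsilon$ stated in the corollary. The weak convergence $x_k \rightharpoonup x^\star$ to a zero of $A + C = B + D + C$ then follows from item (iv) of \cref{cor:add-mom-conv}. No step here looks challenging; the only care needed is in matching the index conventions and signs when unpacking the momentum-correction and additional-momentum terms in the forward step.
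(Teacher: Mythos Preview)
Your proposal is correct and follows exactly the approach indicated in the paper: it instantiates \cref{alg:nofob_mom_add_mom} with the FHRB parameter choices $A=B+D$, $S=\Id$, $\gamma_k=\alpha_k$, $M_k=\alpha_k^{-1}\Id-D$, verifies \cref{ass:prob-assump,ass:alg-assump}, and then substitutes $L_k=\alpha_k\delta$ and $\ell=\beta$ into \cref{eq:cor:add-mom-cond} to obtain the stated step-size condition. This is precisely what the paper means by ``analogously to how the results of \cref{cor:fhrb-conv} follow from \cref{thm:main-conv}.''
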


When $C = 0$ this is the same method as \cite[Equation 4.1]{malitskyForwardBackwardSplittingMethod2020} without relaxation and when $D =  0$ it is forward-backward splitting with momentum.
Both of these special cases have been shown to converge under certain conditions but our results expand these conditions in both settings.
In the FRB with momentum case, \cref{cor:fhrb-mom-conv} allows for step-sizes that depend on the iteration index $k$ while \cite[Theorem 4.3]{malitskyForwardBackwardSplittingMethod2020} only allows for constant step-size, $\alpha_k = \alpha$ for all $k \in \N$.
In the forward-backward with momentum case, \cref{cor:fhrb-mom-conv} makes it possible to find a convergent step-size $\alpha_k$ for all $\theta \in (-1,\frac{1}{3})$, which is the only result we know of that allows for negative momentum.
This is especially interesting considering that the magnitude of negative momentum is allowed to be larger than the magnitude of positive momentum.
Our upper bound on the momentum matches other results in the literature for weak sequence convergence---\cite{malitskyForwardBackwardSplittingMethod2020} when $C = 0$, \cite{alvarezInertialProximalMethod2001} when $C=D=0$, and \cite{moudafiConvergenceSplittingInertial2003} when $C \neq 0$ and $D=0$.\footnote{
	The work in \cite{moudafiConvergenceSplittingInertial2003} does not present an explicit convergence condition for a fixed choice of $\theta$.
	Instead, they present a criterion for selecting an iteration dependent $\theta_k$ adaptively.
	However, in a remark they mention results from \cite{alvarezInertialProximalMethod2001} which, when combined with their results, yield a convergence criteria for a fixed choice of $\theta$.
}
In the gradient-descent case, larger upper bounds on $\theta$ and $\alpha_k$ have been shown to work \cite{ghadimiGlobalConvergenceHeavyball2015}.
These results guarantee ergodic convergence of function values and are not applicable to general monotone inclusion problems.
}

\section{Forward-Half-Reflected-Backward with Iterate Momentum}\label{sec:fhrb}
We present the forward-half-reflected-backward with iterate momentum algorithm in \Cref{alg:fhrb-mom} as a special case of \Cref{alg:nofob_mom_add_mom}. 
\begin{algorithm}
	\caption{Forward-Half-Reflected-Backward with Iterate Momentum}
	\label{alg:fhrb-mom}
	Consider problem \cref{eq:fhrb-prob}.
	With $x_0, x_{-1} \in \Hprim$ and $\alpha_{-1} > 0$, for all $k\in\N$ iteratively perform
	\begin{align*}
		\bar{x}_k &= x_k + \theta(x_{k}-x_{k-1}), \\
		x_{k+1} &=(\Id+\alpha_kB)^{-1}(\bar{x}_k - \alpha_kCx_k - (\alpha_k+\alpha_{k-1})Dx_k + \alpha_{k-1}Dx_{k-1} )
	\end{align*}
	where $\alpha_k > 0$ and $\theta < 1$.
\end{algorithm}
It is a method for finding $x\in\Hprim$ such that
\begin{equation}\label{eq:fhrb-prob}
0 \in Bx + Dx + Cx
\end{equation}
for which the following assumption holds.
\begin{ass}\label{ass:fhrb-prob-ass}
	The operators of \cref{eq:fhrb-prob} satisfy:
	\begin{enumerate}[(i)]
		\itemsep0em
		\item $B\colon\Hprim \to 2^\Hprim$ is maximally monotone.
		\item $D\colon\Hprim\to\Hprim$ is $\delta$-Lipschitz continuous.
		\item $B + D$ is maximally monotone.
		\item $C\colon\Hprim\to\Hprim$ is $\beta^{-1}$-cocoercive.
		\item $\zer (B+D+C) \neq \emptyset$.
	\end{enumerate}
If $C = 0$, we set $\beta = 0$.
\end{ass}

By letting $A = B+D$, problem \cref{eq:fhrb-prob} can be seen as an instance of our standard problem formulation \cref{eq:prob}.
By letting $S=\Id$, \Cref{ass:fhrb-prob-ass} implies that \Cref{ass:prob-assump} holds with $\ell=\beta$.
With these choices, \Cref{alg:fhrb-mom} is obtained from \cref{alg:nofob_mom_add_mom} by choosing $M_k = \alpha_k^{-1}\Id - D$ and $\gamma_k = \alpha_k$ for some step-size $\alpha_k > 0$.
The backward step of the algorithm becomes
\begin{align*}
	(M_k + A)^{-1}
	= (\alpha_k^{-1}\Id - D + B + D)^{-1}
	= (\Id + \alpha_k B)^{-1}\circ\alpha_k\Id.
\end{align*}
Note, the backward step is independent of $D$ and the algorithm will, as we will show next, only depend on $D$ through the forward step.
The operator $\gamma_kM_k-S$ used in the correction term becomes
\begin{align*}
	\gamma_kM_k-S = \alpha_k(\alpha_k^{-1}\Id - D) - \Id = -\alpha_kD,
\end{align*}
and the complete forward step with momentum correction is
\begin{align*}
	&M_kx_k - Cx_k + \gamma_k^{-1}u_k +\gamma_k^{-1}\theta S(x_k-x_{k-1})\\
	& = \alpha_k^{-1}x_k - Dx_k - Cx_k - \alpha_k^{-1}(\alpha_{k-1}Dx_k - \alpha_{k-1}Dx_{k-1})+\alpha_k^{-1}\theta (x_k-x_{k-1})\\
 & = \alpha_k^{-1}\bar{x}_k - Dx_k - Cx_k - \alpha_k^{-1}(\alpha_{k-1}Dx_k - \alpha_{k-1}Dx_{k-1}),
\end{align*}
where $\bar{x}_k=x_k+\theta(x_k-x_{k-1})$.
Combining the backward and forward steps yields the full \cref{alg:fhrb-mom}.
In this algorithm, we do not need to evaluate both $M_{k-1}x_{k}$ and $M_kx_k$ from scratch since we can reuse the potentially expensive computation of $Dx_k$.
\begin{cor}\label{cor:fhrb-mom-conv}
	Let \cref{ass:fhrb-prob-ass} hold and consider problem \cref{eq:fhrb-prob} and \cref{alg:fhrb-mom}.
	If there exists $\epsilon > 0$ such that
	\begin{align*}
		\epsilon \leq \alpha_k ,\quad \alpha_{k}\delta + \alpha_{k+1}(\delta + \tfrac{\beta}{2}) \leq (1-\theta-2|\theta|) - \epsilon
	\end{align*}
	for all $k\in\N$, then $x_k \rightharpoonup x^\star$ where $x^\star$ is a solution to \cref{eq:fhrb-prob}.
	\begin{proof}
		After \cref{ass:fhrb-prob-ass}, we concluded that \cref{ass:prob-assump} holds for the reformulation of \cref{eq:fhrb-prob} into \cref{eq:prob} via $A=B+D$.
		\Cref{ass:alg-assump} also holds since $\gamma_k = \alpha_k \geq \epsilon > 0$ and $\gamma_kM_k - S = -\alpha_k D$ is $\alpha_k\delta$-Lipschitz continuous.
		Inserting $\gamma_k$, $\beta$, $\delta$, and $\theta$ into \cref{eq:cor:add-mom-cond} of \cref{cor:add-mom-conv} then directly gives the step-size condition and the results follow from the corollary.
	\end{proof}
\end{cor}

The forward-half-reflected-backward (FHRB) method and its special case, the forward-reflected-backward (FRB) method\footnote{FHRB was referred to as a three-operator splitting variant of FRB in the original work.} in \cite{malitskyForwardBackwardSplittingMethod2020}, are special cases of \Cref{alg:fhrb-mom}. They are obtained by setting $\theta=0$ (FHRB) and $\theta=0$ and $C=0$ (FRB).
Our analysis assumes that $B$ and $B+D$ are maximally monotone. In \cite{malitskyForwardBackwardSplittingMethod2020}, they instead assume that $B$ and $D$ are both maximally monotone which implies that $B+D$ is maximally monotone since $D$ is also Lipschitz continuous with full domain.
Our assumptions are slightly more general since we can allow for non-monotone $D$ as long as $B$ can compensate for it.

Our step-size conditions are slightly relaxed compared to the ones in \cite{malitskyForwardBackwardSplittingMethod2020}.
Our conditions match these when a constant step-size $\alpha_k = \alpha$ is chosen.
However, the original work only provides convergence conditions for non-constant step-sizes in the FRB case, i.e., $C=0$.
In that case, \cite{malitskyForwardBackwardSplittingMethod2020} proved convergence if $\epsilon \leq 2\alpha_k \leq \delta^{-1} - \epsilon$ for some $\epsilon > 0$ and all $k\in\N$ which is slightly more restrictive than our condition.

When $C = 0$, \Cref{alg:fhrb-mom} is \cite[Equation 4.1]{malitskyForwardBackwardSplittingMethod2020} without relaxation and when $D =  0$ it is forward-backward splitting with momentum.
Both of these special cases have been shown to converge under certain conditions but our results expand these conditions in both settings.
In the FRB with momentum case, \cref{cor:fhrb-mom-conv} allows for step-sizes that depend on the iteration index $k$ while \cite[Theorem 4.3]{malitskyForwardBackwardSplittingMethod2020} only allows for constant step-size, $\alpha_k = \alpha$ for all $k \in \N$.
In the forward-backward with momentum case, \cref{cor:fhrb-mom-conv} makes it possible to find a convergent step-size $\alpha_k$ for all $\theta \in (-1,\frac{1}{3})$, which is the only result we know of that allows for negative momentum.
This is especially interesting considering that the magnitude of negative momentum is allowed to be larger than the magnitude of positive momentum.
Our upper bound on the momentum matches other results in the literature for weak sequence convergence---\cite{malitskyForwardBackwardSplittingMethod2020} when $C = 0$, \cite{alvarezInertialProximalMethod2001} when $C=D=0$, and \cite{moudafiConvergenceSplittingInertial2003} when $C \neq 0$ and $D=0$.\footnote{
	The work in \cite{moudafiConvergenceSplittingInertial2003} does not present an explicit convergence condition for a fixed choice of $\theta$.
	Instead, they present a criterion for selecting an iteration dependent $\theta_k$ adaptively.
	However, in a remark they mention results from \cite{alvarezInertialProximalMethod2001} which, when combined with their results, yield a convergence criteria for a fixed choice of $\theta$.
}
In the gradient-descent case, larger upper bounds on $\theta$ and $\alpha_k$ have been shown to work \cite{ghadimiGlobalConvergenceHeavyball2015}.
These results guarantee ergodic convergence of function values and are not applicable to general monotone inclusion problems.

\begin{rem}
	The same nonlinear kernel that in this case generates FHRB and FRB yields the forward-backward-half-forward \cite{briceno-ariasForwardBackwardHalfForwardAlgorithm2018} and forward-backward-forward \cite{tsengModifiedForwardBackwardSplitting2000} methods when used in the nonlinear forward-backward scheme with projection correction \cite{giselssonNonlinearForwardBackwardSplitting2021}.
	The two sets of algorithms can therefore be seen to have the same nonlinear forward-backward step but with different correction methods to guarantee convergence.
	Due to the momentum correction's reuse of old information, FHRB and FRB have cheaper per-iteration costs compared to the projection correction counterparts.
\end{rem}

\section{Two Novel Primal-Dual Methods}\label{sec:primdual}

We will present two new primal-dual methods for solving the problem of finding $y \in \Hsec$ such that
\begin{equation}\label{eq:primdualprob-prim}
	0\in By + (V^*\circ D\circ V)y + Ey + Fy
\end{equation}
where the following assumptions hold.
\begin{ass}\label{ass:primdualprob-ass}
Let $\Hsec$ and $\Hdual$ be real Hilbert spaces. The operators of \cref{eq:primdualprob-prim} satisfy:
	\begin{enumerate}[(i)]
		\itemsep0em
		\item $B\colon\Hsec \to 2^\Hsec$ and $D\colon \Hdual\to 2^\Hdual$ are maximally monotone.
		\item $E\colon\Hsec\to\Hsec$ is monotone and $\delta$-Lipschitz continuous.
		\item $F\colon\Hsec\to\Hsec$ is $\beta^{-1}$-cocoercive.
		\item $V\colon\Hsec\to\Hdual$ is linear and bounded.
		\item $\zer (B + (V^*\circ D\circ V) + E + F) \neq \emptyset$.
	\end{enumerate}
If $F = 0$, we set $\beta = \beta^{-1} = 0$.
\end{ass}
By a primal-dual method, we mean a method that, instead of solving \cref{eq:primdualprob-prim} directly, solves the equivalent primal-dual problem of finding $y\in \Hsec$ and $z \in \Hdual$ such that%
\begin{equation}\label{eq:primdualprob}
	0 \in
	\begin{cases}
		By + V^*z + Ey + Fy \\
		D^{-1}z - Vy.
	\end{cases}
\end{equation}
The two primal-dual methods are derived by reformulating this primal-dual problem into our standard form \cref{eq:prob} and then applying \cref{alg:nofob_mom} with different sets of design parameters.
There is no unique way of reformulating \cref{eq:primdualprob} into \cref{eq:prob} but we set $\Hprim = \Hsec\times\Hdual$ and define, with some abuse of block matrix notation, $A\colon\Hsec\times\Hdual \to 2^{\Hsec\times\Hdual}$ and $C\colon\Hsec\times\Hdual \to \Hsec\times\Hdual$ as
\begin{equation}\label{eq:primdual-operators}
	A
	= \underbrace{\begin{bmatrix} B & 0 \\ 0 & D^{-1} \end{bmatrix}}_{\widehat{A}}
	+ \underbrace{\begin{bmatrix} E & 0 \\ 0 & 0 \end{bmatrix}}_{\widehat{E}}
	+ \underbrace{\begin{bmatrix} 0 & V^* \\ -V & 0 \end{bmatrix}}_{\widehat{V}}
	\quad \text{and} \quad
	C = \begin{bmatrix} F & 0 \\ 0 & 0 \end{bmatrix}.
\end{equation}
Assuming $A+C$ has at least one zero, these operators satisfy \cref{ass:prob-assump} since $A = \widehat{A} + \widehat{E} + \widehat{V}$ is the sum of a maximally monotone operator $\widehat{A}$ and two maximally monotone operators $\widehat{E}$ and $\widehat{V}$ with full domains.
The properties of $\widehat{A}$, $\widehat{E}$, and $\widehat{V}$ are results of the following: maximal monotonicity of $B$ and $D$; monotonicity and Lipschitz continuity of $E$; and the skew-adjointness and linearity of $\widehat{V}$.
The first assumption of \cref{ass:prob-assump} is then satisfied and the second assumption regarding the cocoercivity of $C$ is easily verified in the standard metric of $\Hsec\times\Hdual$.
However, the algorithms in \cref{sec:primdual-tri,sec:primdual-res} will use different scaling operators $S$ and we will therefore defer the derivation of more precise cocoercivity constants to the respective sections since the constants depend on $S$.

\subsection{Primal-Dual Method with Block-Triangular Resolvent}\label{sec:primdual-tri}
To derive our first primal-dual algorithm, we decompose the iterates of \cref{alg:nofob_mom} as $x_k=(y_k,z_k)$ with $y_k\in\Hsec$ and $z_k\in\Hdual$ for all $k\in\N$.
The algorithm is given by the following design parameters
\begin{equation}\label{eq:primdual-tri-params}
	S =
	\begin{bmatrix} \Id & -\tau V^* \\ -\tau V & \tau\sigma^{-1}\Id \end{bmatrix}
	, \quad
	M_k =
	\underbrace{\begin{bmatrix} \tau^{-1}\Id & 0 \\ -\lambda_k V & \sigma^{-1}\Id \end{bmatrix}}_{\widehat{M}_k}
	- \widehat{E}
	- \widehat{V}
	\quad \text{and} \quad
	\gamma_k = \tau
\end{equation}
where $\tau,\sigma > 0$ such that $\tau\sigma\norm{V}^2 < 1$ and $\lambda_k \in \R$ for all $k\in\N$.
The assumption on $\tau$ and $\sigma$ guarantees that $S\in\mathcal{P}(\Hsec\times\Hdual)$.
The forward step operator and the correction operator are
\begin{align*}
	M_k - C =
	\begin{bmatrix}
		\tau^{-1}\Id  - E - F & - V^*\\
		(1-\lambda_k)V & \sigma^{-1}\Id
	\end{bmatrix}
	,\quad
	\gamma_kM_k - S
	=
	\tau
	\begin{bmatrix}
		- E & 0 \\
		(2 - \lambda_k) V & 0
	\end{bmatrix}
	.
\end{align*}
Inserting these operators into the complete forward step with correction,
\begin{equation*}
	\begin{aligned}
		(\hat{y}_k,\hat{z}_k) \coloneqq\;
		& M_k(y_k,z_k) - C(y_k,z_k) + \gamma_k^{-1}(\gamma_{k-1}M_{k-1}-S)(y_k,z_k) \\
		& - \gamma_k^{-1}(\gamma_{k-1}M_{k-1}-S)(y_{k-1},z_{k-1}),
	\end{aligned}
\end{equation*}
where $(\hat{y}_k,\hat{z}_k)\in\Hsec\times\Hdual$, yields
\begin{align*}
	\hat{y}_k &= \tau^{-1}y_k -V^*z_k - (2Ey_k -Ey_{k-1}) - Fy_k, \\
	\hat{z}_k &= \sigma^{-1}z_k + (1-\lambda_k)Vy_k + (2-\lambda_{k-1})V(y_k-y_{k-1}).
\end{align*}
What remains to compute is the backward step.
The kernel $M_k$ is designed to cancel out the $\widehat{E}$ and $\widehat{V}$ terms, making only the forward step depend on these operators,
\begin{align*}
	(M_k + A)^{-1} = (\widehat{M}_k - \widehat{E} - \widehat{V} + \widehat{A} + \widehat{E} + \widehat{V})^{-1} = (\widehat{M}_k + \widehat{A})^{-1}.
\end{align*}
This is the inverse of a lower block triangular operator and it can therefore be computed with back substitution according to
\begin{align*}
	&(y_{k+1},z_{k+1}) = (\widehat{M}_k + \widehat{A})^{-1}(\hat{y}_k,\hat{z}_k) \\
	&\quad\qquad\iff\quad
	(\hat{y}_k,\hat{z}_k) \in (\widehat{M}_k + \widehat{A})(y_{k+1},z_{k+1}) \\
	&\quad\qquad\iff\quad
	\left\{
	\begin{alignedat}{3}
		\hat{y}_k &\in (\tau^{-1}\Id + B)y_{k+1} \\
		\hat{z}_k &\in -\lambda_k Vy_{k+1} + (\sigma^{-1}\Id + D^{-1})z_{k+1}
	\end{alignedat}
	\right.
	\\
	&\quad\qquad\iff\quad
	\left\{
	\begin{alignedat}{3}
		y_{k+1} & {}={} (\Id + \tau B)^{-1} (\tau \hat{y}_k) \\
		z_{k+1} & {}={} (\Id + \sigma D^{-1})^{-1} (\sigma \hat{z}_k + \sigma \lambda_k V y_{k+1}).
	\end{alignedat}
	\right.
\end{align*}
Inserting the expressions for $\hat{y}_k$ and $\hat{z}_k$ results in the following algorithm.
\begin{algorithm}[H]
	\caption{Primal-Dual Method with Block Triangular Resolvent}
	\label{alg:primdual-tri}
	Consider problem \cref{eq:primdualprob-prim}.
	With $y_0, y_{-1} \in \Hsec$, $z_0\in\Hdual$ and $\lambda_{-1} \in \R$, for all $k\in\N$ iteratively perform
	\begin{align*}
		y_{k+1} &= (\Id + \tau B)^{-1} (y_k - \tau V^*z_k - \tau ( 2 Ey_k - Ey_{k-1}) - \tau Fy_k), \\
		v_{k+1} &= \lambda_{k}(y_{k+1} - y_k) + (2-\lambda_{k-1})(y_k - y_{k-1}), \\
		z_{k+1} &= (\Id + \sigma D^{-1})^{-1}(z_k + \sigma V(y_k + v_{k+1})),
	\end{align*}
	where $\tau,\sigma > 0$ and $\lambda_k \in \R$.
\end{algorithm}

Due to the lower block-triangular structure of the operator in the backward step, the primal update of $y_{k+1}$ is independent of the dual update of $z_{k+1}$ but the opposite statement does not hold in general.
This dependency is controlled by $\lambda_k$ and manifests itself as a correction $v_{k+1}$ added to the primal iterate used in the dual update.
When $\lambda_k = \lambda_{k-1}$, the correction $v_{k+1}$ is an affine combination of an extrapolation step based either on the current or previous primal update, see \cref{fig:primdual-tri-corr}.
When $\lambda_k \neq \lambda_{k-1}$, the correction can be an arbitrary linear combination of the two different extrapolations.
However, the choice of the sequence $(\lambda_k)_{k\in\N}$ will affect the range of allowed step-sizes.
The more $\lambda_k$ differs from $2$, the smaller the upper bound on the step-sizes is in the following convergence result.

\begin{figure}[t]
	\centering
	\begin{tikzpicture}
		\pgfmathsetmacro{\varlambda}{1.2}

		\coordinate (y0) at (0,0) {};
		\coordinate (y1) at (2,0) {};
		\coordinate (y2) at (3,-1) {};
		\coordinate (l0) at ($(y1)+2*(y1)-2*(y0)$) {};
		\coordinate (l2) at ($(y1)+2*(y2)-2*(y1)$) {};
		\coordinate (yhat) at ($(y1)+\varlambda*(y2)-\varlambda*(y1)+2*(y1)-2*(y0)-\varlambda*(y1)+\varlambda*(y0)$) {};

		\node at (y0) [above = 0] {$y_{k-1}$};
		\node at (y1) [above = 0] {$y_{k}$};
		\node at (y2) [below left = 0] {$y_{k+1}$};
		\node at (l0) [below right = 0] {$y_k + v_{k+1}$, ($\lambda_k = \lambda_{k-1} = 0$)};
		\node at (l2) [below right = 0] {$y_k + v_{k+1}$, ($\lambda_k = \lambda_{k-1} = 2$)};
		\node at (yhat) [below right = 0] {$y_k + v_{k+1}$, ($\lambda_k = \lambda_{k-1} = \varlambda$)};

		\draw[line width=1pt, -stealth] (y0)--(y1) {};
		\draw[line width=1pt, -stealth] (y1)--(y2) {};
		\draw[line width=1pt, dotted] ($(l0)!-.15!(l2)$)--($(l0)!1.20!(l2)$) {};
		\draw[line width=1pt, -stealth, dashed] (y1)--(l0) {};
		\draw[line width=1pt, -stealth, dashed] (y1)--(l2) {};
		\draw[line width=1pt, -stealth, dashed] (y1)--(yhat) {};
	\end{tikzpicture}
	\caption{Update of the corrected primal iterate $y_k + v_{k+1}$ in \cref{alg:primdual-tri}.}
	\label{fig:primdual-tri-corr}
\end{figure}
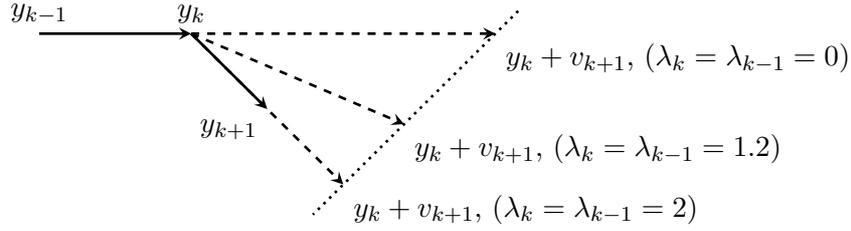

\begin{cor}\label{cor:primdual-tri-conv}
	Let \cref{ass:primdualprob-ass} hold and consider problem \cref{eq:primdualprob-prim} and \cref{alg:primdual-tri}.
	If there exists $\epsilon > 0$ such that
	\begin{align*}
		\tau\sigma\norm{V}^2 + (|2-\lambda_k| + |2-\lambda_{k+1}|)\sqrt{\tau\sigma}\norm{V} + \tau(2\delta + \tfrac{1}{2}\beta) < 1-\epsilon
	\end{align*}
	for all $k\in\N$, then $y_k \rightharpoonup y^\star$ and $z_k \rightharpoonup z^\star$ where $y^\star$ is a solution to \cref{eq:primdualprob-prim} and $(y^\star,z^\star)$ is a solution to \cref{eq:primdualprob}.
\end{cor}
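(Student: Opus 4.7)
The plan is to verify \cref{ass:prob-assump,ass:alg-assump} for the product-space reformulation in \cref{eq:primdual-operators} with the design parameters in \cref{eq:primdual-tri-params} and then invoke \cref{thm:main-conv}. The preamble of \cref{sec:primdual-tri} already establishes that $A = \widehat{A} + \widehat{E} + \widehat{V}$ is maximally monotone and that \cref{ass:primdualprob-ass} gives $\zer(A+C) \neq \emptyset$. Strong positivity $S\in\mathcal{P}(\Hsec\times\Hdual)$ follows from the assumption $\tau\sigma\norm{V}^2 < 1$ via the Schur complement $\Id - \tau\sigma V^*V \succ 0$, and $\gamma_k = \tau$ trivially satisfies the lower-bound part of \cref{ass:alg-assump}. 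What remains to be determined are the cocoercivity constant $\ell$ of $C$ with respect to $S$ and the Lipschitz constant $L_k$ of $\gamma_kM_k - S$ with respect to $S$.

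For the cocoercivity, I would compute the $(1,1)$-block of $S^{-1}$ via block Schur inversion; it equals $(\Id - \tau\sigma V^*V)^{-1}$. Since $C(y,z) = (Fy, 0)$, this yields $\norm{(Fy - Fy', 0)}_{S^{-1}}^2 \leq \norm{Fy - Fy'}^2 /(1 - \tau\sigma\norm{V}^2)$, so combined with the $\beta^{-1}$-cocoercivity of $F$ the operator $C$ is $\ell^{-1}$-cocoercive w.r.t.\ $S$ with $\ell = \beta/(1 - \tau\sigma\norm{V}^2)$. For the Lipschitz constant of
\[
  \gamma_k M_k - S = \tau\begin{pmatrix} -E & 0 \\ (2-\lambda_k) V & 0 \end{pmatrix},
\]
I would split the operator into its two block contributions, bound each $\norm{\cdot}_{S^{-1}}$-norm using the diagonal blocks of $S^{-1}$, and combine with the exact lower bound $\inf_{u_2}\norm{(u_1,u_2)}_S^2 = \norm{u_1}^2 - \tau\sigma\norm{Vu_1}^2$. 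Each factor contributes $(1 - \tau\sigma\norm{V}^2)^{-1/2}$ and one arrives at $L_k = (\tau\delta + |2-\lambda_k|\sqrt{\tau\sigma}\norm{V})/(1 - \tau\sigma\norm{V}^2)$.

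With these constants, condition \cref{eq:thm:conv-cond} of \cref{thm:main-conv} applied with index $k+1$ becomes, after multiplying through by $1-\tau\sigma\norm{V}^2$,
\[
  2\tau\delta + (|2-\lambda_{k}| + |2-\lambda_{k+1}|)\sqrt{\tau\sigma}\norm{V} + \tfrac{\tau\beta}{2} \leq (1-\epsilon)(1 - \tau\sigma\norm{V}^2).
\]
Rewriting the hypothesis of the corollary as $2\tau\delta + (|2-\lambda_k| + |2-\lambda_{k+1}|)\sqrt{\tau\sigma}\norm{V} + \tfrac{\tau\beta}{2} \leq 1 - \epsilon' - \tau\sigma\norm{V}^2$ shows that it implies the required inequality with $\epsilon = \epsilon'/(1-\tau\sigma\norm{V}^2) > 0$. \Cref{thm:main-conv} then delivers weak convergence of the joint iterate $(y_k, z_k) \rightharpoonup (y^\star, z^\star) \in \zer(A+C)$, and the equivalence between \cref{eq:primdualprob-prim} and \cref{eq:primdualprob} identifies $y^\star$ as a solution of the primal problem and $(y^\star, z^\star)$ as a solution of the primal-dual problem.

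The main obstacle will be the sharp Lipschitz estimate for $\gamma_k M_k - S$: the off-diagonal $-\tau V^*$ coupling in $S$ entangles the primal and dual components in the scaled norm, so a direct triangle-inequality bound in the unscaled inner product misses the correct dependence on $\tau\sigma\norm{V}^2$. The cleanest route is the explicit block inversion of $S$ sketched above; an equivalent reformulation that makes the composite quantity $\sqrt{\tau\sigma}\norm{V}$ appear naturally is the change of variables $\tilde u_2 = \sqrt{\tau/\sigma}\, u_2$, which rewrites the scaled norm as $\norm{(u_1,\tilde u_2)}_{\tilde S}^2$ with $\tilde S = \bigl(\begin{smallmatrix}\Id & -\tilde V^* \\ -\tilde V & \Id\end{smallmatrix}\bigr)$ and $\tilde V = \sqrt{\tau\sigma}\, V$, reducing the estimate to a problem in the standard norm.
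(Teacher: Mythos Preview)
Your proposal is correct and follows essentially the same route as the paper: verify \cref{ass:prob-assump,ass:alg-assump} for the product-space reformulation, obtain $\ell=\beta/(1-\tau\sigma\norm{V}^2)$ and $L_k=(\tau\delta+|2-\lambda_k|\sqrt{\tau\sigma}\norm{V})/(1-\tau\sigma\norm{V}^2)$, then invoke \cref{thm:main-conv}. The only cosmetic difference is that the paper packages your Schur-complement norm bounds into a dedicated lemma (\cref{lem:primdual-tri-scaling}) giving $\norm{(y,0)}_{S^{-1}}^2$, $\norm{(0,z)}_{S^{-1}}^2$, and the lower bound $\norm{y}^2\leq(1-\tau\sigma\norm{V}^2)^{-1}\norm{(y,z)}_S^2$ directly, rather than deriving them inline via block inversion or the $\tilde u_2=\sqrt{\tau/\sigma}\,u_2$ rescaling you suggest.
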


Before proceeding to the proof of \cref{cor:primdual-tri-conv}, we present the following lemma on which the proof relies.
\begin{lem}\label{lem:primdual-tri-scaling}
	Let $S\in\mathcal{P}(\Hsec\times\Hdual)$ be from \cref{eq:primdual-tri-params}.
	The inverse of $S$ satisfies
	\begin{align*}
		S^{-1} =
		\begin{bmatrix} (\Id - \tau\sigma V^*V)^{-1} & 0 \\ 0 & (\Id - \tau\sigma VV^*)^{-1} \end{bmatrix}
		\begin{bmatrix} \Id & \sigma V^* \\ \sigma V & \tau^{-1}\sigma\Id \end{bmatrix}.
	\end{align*}
	The following inequalities hold for all $y \in \Hsec$ and $z\in \Hdual$:
	\begin{gather*}
		\norm{(y,0)}_{S^{-1}}^2 \leq \tfrac{1}{1-\tau\sigma\norm{V}^2}\norm{y}^2
		,\quad
		\norm{(0,z)}_{S^{-1}}^2 \leq \tfrac{\tau^{-1}\sigma}{1-\tau\sigma \norm{V}^2}\norm{z}^2\\
		\text{and} \quad
		\norm{y}^2 \leq \tfrac{1}{1-\tau\sigma\norm{V}^2} \norm{(y,z)}_{S}^2.
	\end{gather*}

	\begin{proof}
		The inverse is easily verified and we note that, since $\tau\sigma\norm{V}^2 < 1$ by assumption, $\Id - \tau \sigma V^*V \in \mathcal{P}(\Hsec)$ and $\Id - \tau \sigma VV^* \in \mathcal{P}(\Hdual)$  and hence they are invertible.
		Let $y\in\Hsec$, then
		\begin{align*}
			\norm{(y,0)}_{S^{-1}}^2
			&= \inprod{(\Id-\tau\sigma V^*V)^{-1}y}{y} \\
			&\leq \norm{(\Id-\tau\sigma V^*V)^{-1}}\norm{y}^2 \\
			&\leq \tfrac{1}{1-\tau\sigma \norm{V}^2}\norm{y}^2
		\end{align*}
		which proves the first inequality of the lemma.
		The last step holds since $1 > \tau\sigma \norm{V}^2$.
		Let $z\in\Hdual$, then
		\begin{align*}
			\norm{(0,z)}_{S^{-1}}^2
			&= \tau^{-1}\sigma \inprod{(\Id-\tau\sigma VV^*)^{-1}z}{z} \\
			&\leq \tau^{-1}\sigma \norm{(\Id-\tau\sigma VV^*)^{-1}}\norm{z}^2 \\
			&\leq \tfrac{\tau^{-1}\sigma}{1-\tau\sigma \norm{V}^2}\norm{z}^2
		\end{align*}
		which proves the second inequality of the lemma.
		Again, the last step holds since $1 > \tau\sigma \norm{V}^2$.
		Let $y\in\Hsec$ and $z\in\Hdual$, then
		\begin{align*}
			\norm{(y,z)}_S^2
			&= \norm{y}^2 + \tau\sigma^{-1}\norm{z}^2 - 2\tau\inprod{Vy}{z} \\
			&\geq \norm{y}^2 + \tau\sigma^{-1}\norm{z}^2 - \tau(\sigma\norm{V}^2\norm{y}^2 + \sigma^{-1}\norm{z}^2) \\
			&= (1 - \tau\sigma\norm{V}^2)\norm{y}^2
		\end{align*}
		which proves the third inequality of the lemma.
	\end{proof}
\end{lem}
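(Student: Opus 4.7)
The plan is to prove the four claims of the lemma in sequence: first the inverse formula, then each of the three norm inequalities, since each later claim builds directly on the preceding ones.

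First I would establish that the operators $\Id - \tau\sigma V^*V$ on $\Hsec$ and $\Id - \tau\sigma VV^*$ on $\Hdual$ are both elements of their respective $\mathcal{P}$-classes, and in particular invertible. This is immediate from the assumption $\tau\sigma\norm{V}^2 < 1$ combined with the standard identities $\norm{V^*V} = \norm{VV^*} = \norm{V}^2$ and the fact that $V^*V$ and $VV^*$ are self-adjoint and positive. Their inverses are then self-adjoint, positive, and satisfy the operator-norm bound $\norm{(\Id - \tau\sigma V^*V)^{-1}} \leq \tfrac{1}{1-\tau\sigma\norm{V}^2}$ (and similarly for the other one), which follows from spectral considerations or a Neumann-series argument. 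With invertibility in hand, the inverse formula for $S^{-1}$ can be verified by directly computing the block product $S \cdot S^{-1}$ and checking that the four block entries reduce to $\Id$, $0$, $0$, $\Id$ respectively; the off-diagonal cancellations use the intertwining relation $V(\Id - \tau\sigma V^*V)^{-1} = (\Id - \tau\sigma VV^*)^{-1} V$, which in turn comes from $V(V^*V) = (VV^*)V$.

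Next, for the first two norm inequalities I would just read off the top-left and bottom-right blocks of the product formula for $S^{-1}$: substituting $(y,0)$ and $(0,z)$ into the expression $\inprod{S^{-1}\cdot}{\cdot}$ gives $\inprod{(\Id - \tau\sigma V^*V)^{-1}y}{y}$ and $\tau^{-1}\sigma\inprod{(\Id - \tau\sigma VV^*)^{-1}z}{z}$, respectively. Both are then bounded above by the corresponding operator norm times $\norm{y}^2$ or $\norm{z}^2$, yielding the stated constants.

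Finally, for the third inequality I would expand $\norm{(y,z)}_{S}^2$ directly using the block definition of $S$ to obtain $\norm{y}^2 + \tau\sigma^{-1}\norm{z}^2 - 2\tau\inprod{Vy}{z}$, then absorb the cross term using Young's inequality in the form $2\tau\inprod{Vy}{z} \leq \tau\sigma\norm{V}^2\norm{y}^2 + \tau\sigma^{-1}\norm{z}^2$. The $\tau\sigma^{-1}\norm{z}^2$ terms cancel and one is left with $(1 - \tau\sigma\norm{V}^2)\norm{y}^2$, which rearranges to the desired bound.

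I do not anticipate a serious obstacle here; the only subtlety is being careful with the intertwining identity in the block verification of $S^{-1}$, and with selecting the right scaling in Young's inequality so that the dual term cancels cleanly. Everything else is mechanical block-matrix algebra and standard operator-norm bounds under the assumption $\tau\sigma\norm{V}^2 < 1$.
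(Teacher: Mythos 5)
Your proposal is correct and follows essentially the same route as the paper: read off the diagonal blocks of $S^{-1}$ for the first two inequalities, bound by the operator norm of $(\Id-\tau\sigma V^*V)^{-1}$ (resp.\ $(\Id-\tau\sigma VV^*)^{-1}$) using $\tau\sigma\norm{V}^2<1$, and expand $\norm{(y,z)}_S^2$ with Young's inequality for the third. The only difference is that you spell out the block verification of $S^{-1}$ via the intertwining relation, which the paper leaves as ``easily verified.''
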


\begin{proof}[\textbf{Proof of \cref{cor:primdual-tri-conv}}]
	As previously stated, the choice of $A$ and $C$ in \cref{eq:primdual-operators} satisfies \cref{ass:prob-assump} since we assume that a solution exists.
	What remains to verify of \cref{ass:prob-assump} is to derive a cocoercivity constant of $C$.
	The first inequality of \cref{lem:primdual-tri-scaling} directly gives
	\begin{align*}
		\norm{C(y,z) - C(y',z')}_{S^{-1}}^2
		&\leq \tfrac{1}{1-\tau\sigma\norm{V}^2}\norm{Fy-Fy'}^2 \\
		&\leq \tfrac{\beta}{1-\tau\sigma\norm{V}^2}\inprod{Fy-Fy'}{y-y'} \\
		&= \tfrac{\beta}{1-\tau\sigma\norm{V}^2}\inprod{C(y,z)-C(y',z')}{(y,z)-(y',z')}
	\end{align*}
	for all $(y,z),(y',z')\in\Hsec\times\Hdual$.
	Hence, $C$ is $\ell^{-1}$-cocoercive w.r.t.\ $S$ with $\ell = \frac{\beta}{1-\tau\sigma\norm{V}^2}$.
	Note that we can set $\ell = 0$ if $F=0$.

	The assumptions placed on the design parameters, \cref{ass:alg-assump}, also need to hold.
	For item \cref{ass:item:alg-assump-gamma} of \cref{ass:alg-assump}, we directly see that $\gamma_k = \tau > 0$.
	We prove \cref{ass:item:alg-assump-M} of \cref{ass:alg-assump}, the Lipschitz continuity of
	\begin{align*}
		\gamma_kM_k - S = \tau(\widehat{M}_k- \widehat{V}) - S - \tau\widehat{E},
	\end{align*}
	by showing Lipschitz continuity of $\tau\widehat{E}$ and of $\tau(\widehat{M}_k-\widehat{V})-S$ separately.
	The Lipschitz continuity of $\gamma_kM_k - S$ then follows from the Lipschitz continuity of a sum of Lipschitz continuous operators.
	Starting with $\tau\widehat{E}$ and using the first and third inequalities from \cref{lem:primdual-tri-scaling} and the Lipschitz continuity of $E$ gives
	\begin{align*}
		\norm{\widehat{E}(y,z)-\widehat{E}(y',z')}_{S^{-1}}^2
		&\leq \tfrac{1}{1 - \tau\sigma\norm{V}^2} \norm{Ey-Ey'}^2 \\
		&\leq \tfrac{\delta^2}{1 - \tau\sigma\norm{V}^2} \norm{y-y'}^2 \\
		&\leq \tfrac{\delta^2}{(1 - \tau\sigma\norm{V}^2)^2} \norm{(y,z)-(y',z')}_S^2
	\end{align*}
	for all $(y,z),(y',z')\in\Hsec\times\Hdual$.
	The term $\tau\widehat{E}$ is therefore $\frac{\tau\delta}{1-\tau\sigma\norm{V}^2}$-Lipschitz continuous w.r.t.\ $S$.
	For $\tau(\widehat{M}_k-\widehat{V})-S$, we first note that
	\begin{align*}
		\tau(\widehat{M}_k-\widehat{V}) - S = \begin{bmatrix} 0 & 0 \\ \tau(2-\lambda_k) V & 0 \end{bmatrix}
	\end{align*}
	and we can use the second inequality of \cref{lem:primdual-tri-scaling}:
	\begin{align*}
		\norm{(\tau(\widehat{M}_k-\widehat{V})-S)(y,z)}_{S^{-1}}^2
		&\leq \tfrac{\tau^{-1}\sigma}{1-\tau\sigma \norm{V}^2}\norm{\tau(2-\lambda_k)Vy}^2\\
		&\leq (2-\lambda_k)^2\tfrac{\tau\sigma\norm{V}^2}{1 - \tau\sigma\norm{V}^2}\norm{y}^2 \\
		&\leq (2-\lambda_k)^2\tau\sigma\norm{V}^2\tfrac{1}{(1 - \tau\sigma\norm{V}^2)^2} \norm{(y,z)}_S^2
	\end{align*}
	for all $(y,z)\in\Hsec\times\Hdual$.
	The operator $\tau(\widehat{M}_k-\widehat{V})-S$ is therefore Lipschitz continuous w.r.t.\ $S$ with constant $|2-\lambda_k|\sqrt{\tau\sigma}\norm{V}\tfrac{1}{1-\tau\sigma\norm{V}^2}$.
	Adding these two Lipschitz constants yields that $\gamma_kM_k-S$ is $L_k$-Lipschitz continuous w.r.t.\ $S$ where
	\begin{align*}
		L_k = \tfrac{1}{1-\tau\sigma\norm{V}^2}(|2-\lambda_k|\sqrt{\tau\sigma}\norm{V} + \tau\delta),
	\end{align*}
	and \cref{ass:alg-assump} is satisfied.
	The result of the corollary now follows from \cref{thm:main-conv} after inserting the expressions for $\ell$ and $L_k$ into the convergence criterion $0 < \epsilon \leq 1 - L_k - L_{k-1} - \tau\frac{\ell}{2}$.
\end{proof}

\subsubsection*{Related Algorithms}
From \cref{alg:primdual-tri}, when $E=0$ and $\lambda_{k} = 2$ for all $k\in\{-1,0,\dots\}$, we obtain an instance of the V\~u\--Condat algorithm \cite{vuSplittingAlgorithmDual2013,condatPrimalDualSplitting2013}.
If $F=0$ as well, we get the method of Chambolle\--Pock \cite{chambolleFirstOrderPrimalDualAlgorithm2011}.
This is not surprising since both of these methods are special cases of ordinary forward-backward splitting and the kernel $M_k$, see \cref{eq:primdual-tri-params}, is linear, self-adjoint, and can be made strongly positive when $E=0$ and $\lambda_k=2$.
Furthermore, we have that $\gamma_kM_k - S = 0$, which implies that the momentum-correction term is zero and that \cref{alg:nofob_mom} has reduced to the ordinary forward-backward method.
Both when $F\neq0$ and when $F=0$, \cref{cor:primdual-tri-conv} regains the convergence criteria of V\~u\--Condat and Chambolle\--Pock respectively.

When $E=0$, \cref{alg:primdual-tri} shares similarities with the asymmetric-kernel primal-dual method of Latafat and Patrinos \cite[Algorithm 3]{latafatAsymmetricForwardBackward2017}.
They use the same resolvent kernel, but \cite{giselssonNonlinearForwardBackwardSplitting2021} showed that the Latafat\--Patrinos algorithm is a special case of nonlinear forward-backward splitting with projection correction instead of momentum correction.
As discussed in \cref{sec:prob-alg} when comparing momentum and projection corrections, the main benefit of \cref{alg:primdual-tri} is that the momentum correction generally yields cheaper iterations.
In \cref{alg:primdual-tri}, the linear composition term $V$ and its adjoint $V^*$ only need to be evaluated once each, while they need to be evaluated twice each for the Latafat\--Patrinos method.

We can also relate \cref{alg:primdual-tri} to projective splitting methods \cite{ecksteinGeneralProjectiveSplitting2009,combettesAsynchronousBlockiterativePrimaldual2018}.
It has been shown in \cite{giselssonNonlinearForwardBackwardSplitting2021arxiv,buiWarpedResolventSetValued2021} that these methods are nonlinear forward-backward method with projection correction.
In fact, the synchronous projective splitting considered in \cite{giselssonNonlinearForwardBackwardSplitting2021arxiv}
is using the same kernel as in \cref{alg:primdual-tri} with $E=0$ and $\lambda_k = 0$.
We can therefore think of \cref{alg:primdual-tri} with $E=F=0$ and $\lambda_{k} = 0$ for all $k\in\{-1,0,\dots\}$ as a projective splitting method with momentum correction instead of a projection correction.
The benefit of projective splitting methods compared to Chambolle\--Pock-like primal-dual methods is that the primal and dual updates do not depend on each other and can therefore be performed in parallel.
The same holds for \cref{alg:primdual-tri} since the correction $v_{k+1}$ does not depend on $y_{k+1}$ when $\lambda_k = 0$.
The reason for this becomes evident when examining the backward step $(M_k+A)^{-1} = (\widehat{M}_k+\widehat{A})^{-1}$ since both $\widehat{M}_k$ and $\widehat{A}$ are block-diagonal when $\lambda_k = 0$, see \cref{eq:primdual-operators,eq:primdual-tri-params}.

\subsubsection*{Forward-Half-Reflected-Douglas--Rachford}
There is a connection between primal\-/dual methods and Douglas\--Rachford splitting \cite{chambolleFirstOrderPrimalDualAlgorithm2011,oconnorEquivalencePrimaldualHybrid2020,briceno2021split,briceno2023primal}, and this connection also exists for our first primal-dual method, \cref{alg:primdual-tri}.
Whenever $V=\Id$ and $F=0$, choosing $\lambda_{k} = 2$ for all $k\in\{-1,0,\dots\}$, $\sigma = \varsigma^{-1}$ for some $\varsigma > 0$ and using Moreau's identity in the dual update of \cref{alg:primdual-tri}, the forward-reflected-Douglas\--Rachford (FRDR) method in \cite{ryuFindingForwardDouglasRachfordForward2020} is obtained.
Since we can allow for $F\neq 0$, we can analogously construct a forward-half-reflected-Douglas\--Rachford method, presented in \cref{alg:fhrdr}, for solving \cref{eq:primdualprob-prim}.
\begin{algorithm}[H]
	\caption{Forward-Half-Reflected-Douglas--Rachford}
	\label{alg:fhrdr}
	Consider problem \cref{eq:primdualprob-prim} with $V=\Id$.
	With $y_0, y_{-1} \in \Hsec$ and $z_0\in\Hdual$, for all $k\in\N$ iteratively perform
	\begin{align*}
		y_{k+1} &= (\Id + \tau B)^{-1} (y_k - \tau z_k - \tau ( 2 Ey_k - Ey_{k-1}) - \tau Fy_k), \\
		\hat{y}_{k+1} &= (\Id + \varsigma D)^{-1}(\varsigma z_k + 2y_{k+1}-y_k), \\
		z_{k+1} &= z_k + \varsigma^{-1}(2y_{k+1}-y_k - \hat{y}_{k+1}),
	\end{align*}
	where $\tau,\sigma > 0$.
\end{algorithm}

\Cref{alg:fhrdr} converges as per the following result.
\begin{cor}\label{cor:fhrdr-conv}
	Let $V=\Id$ and let \cref{ass:fhrb-prob-ass} hold.
	Consider problem \cref{eq:primdualprob-prim} and \cref{alg:fhrdr}.
	If the step-sizes satisfy
	\begin{align*}
		\tau(\varsigma^{-1} + 2\delta + \tfrac{1}{2}\beta) < 1,
	\end{align*}
	then $y_k \rightharpoonup y^\star$ and $z_k \rightharpoonup z^\star$ where $y^\star$ is a solution to \cref{eq:primdualprob-prim} and $(y^\star,z^\star)$ is a solution to \cref{eq:primdualprob}.
	\begin{proof}
		Follows directly from \cref{cor:primdual-tri-conv} with $V=\Id$ and $\lambda_{k-1}=2$ for all $k\in\N$.
	\end{proof}
\end{cor}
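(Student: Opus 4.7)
The plan is to recognize \cref{alg:fhrdr} as the specialization of \cref{alg:primdual-tri} obtained by setting $V=\Id$, $\sigma=\varsigma^{-1}$, and $\lambda_k = 2$ for all $k\in\{-1,0,\dots\}$, transform the dual update via Moreau's identity, and then read off the convergence condition directly from \cref{cor:primdual-tri-conv}.

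First, I would substitute the chosen parameters into the correction step of \cref{alg:primdual-tri}. With $\lambda_{k-1}=\lambda_k = 2$, the correction simplifies to $v_{k+1} = 2(y_{k+1}-y_k)$, so that $y_k + v_{k+1} = 2y_{k+1} - y_k$, which is exactly the extrapolated primal point fed into the dual step of \cref{alg:fhrdr}. Taking $V=\Id$, the primal update of \cref{alg:primdual-tri} then coincides verbatim with the first line of \cref{alg:fhrdr}, so only the dual step needs to be reinterpreted.

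The key step is rewriting the dual update $z_{k+1} = (\Id + \sigma D^{-1})^{-1}(z_k + \sigma(2y_{k+1} - y_k))$ by Moreau's identity in the form $(\Id + \sigma D^{-1})^{-1}(w) = w - \sigma(\Id + \varsigma D)^{-1}(\varsigma w)$, valid for $\varsigma = \sigma^{-1}$. With $w = z_k + \sigma(2y_{k+1}-y_k)$ and $\varsigma w = \varsigma z_k + 2y_{k+1} - y_k$, the inner resolvent evaluation recovers $\hat{y}_{k+1}$ of \cref{alg:fhrdr}, and the outer subtraction yields $z_{k+1} = z_k + \varsigma^{-1}(2y_{k+1} - y_k - \hat{y}_{k+1})$. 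This is the only nontrivial piece of bookkeeping and is where I would spend the most care, in particular keeping track of the scaling factors produced by the $\varsigma = \sigma^{-1}$ substitution.

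Finally, I would substitute $\|V\|=1$ and $\lambda_k = 2$ (so $|2-\lambda_k| = 0$) into the convergence inequality of \cref{cor:primdual-tri-conv}, which collapses to $\tau\varsigma^{-1} + \tau(2\delta + \tfrac{1}{2}\beta) < 1-\epsilon$. Choosing $\epsilon > 0$ small enough shows that the stated strict inequality $\tau(\varsigma^{-1} + 2\delta + \tfrac{\beta}{2}) < 1$ implies the hypothesis of \cref{cor:primdual-tri-conv}, whence weak convergence of $(y_k,z_k)$ to a primal-dual solution transfers verbatim from that corollary.
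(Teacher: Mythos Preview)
Your proposal is correct and follows essentially the same approach as the paper. The paper's proof is a one-liner invoking \cref{cor:primdual-tri-conv} with $V=\Id$ and $\lambda_{k-1}=2$, relying on the fact that the derivation of \cref{alg:fhrdr} from \cref{alg:primdual-tri} via $\sigma=\varsigma^{-1}$ and Moreau's identity was already spelled out in the text preceding the algorithm; your write-up simply makes that derivation explicit within the proof itself.
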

These convergence conditions match those of \cite{ryuFindingForwardDouglasRachfordForward2020} when $F=0$.

When $E=F=0$, the standard Douglas\--Rachford is retrieved from \cref{alg:fhrdr} if the step-sizes $\tau = \varsigma$ are chosen and the variable change $y_k - \tau z_k\to z_k$ is made.
However, this step-size choice makes the step-size condition of \cref{cor:fhrdr-conv} impossible to satisfy.
The reason for this is that the scaling $S$ of the underlying nonlinear forward-backward method becomes singular, which violates \cref{ass:prob-assump}.
Dealing with this singularity is possible if it is explicitly assumed that $E=F=0$, but this is beyond the scope of this article, where the positive definiteness of $S$ is assumed.

When $E=0$, \cref{alg:fhrdr} is applicable to the same class of problems as the David--Yin method in \cite{davisThreeOperatorSplittingScheme2017}. However, the algorithms are different, although they can both reduce to the Douglas--Rachford iterations when also $F=0$.

\subsection{Primal-Dual Method with Resolvent-Compensated Kernel}\label{sec:primdual-res}
Our second method for solving \cref{eq:primdualprob-prim} through the primal-dual problem \cref{eq:primdualprob} will make further use of the nonlinearity of the kernel by including resolvent evaluations in the kernel itself.
As in the previous case, we reformulate the primal-dual problem to our standard problem \cref{eq:prob} by defining $\mathcal{H}$, $A$, $C$, $\widehat{A}$, $\widehat{E}$, and $\widehat{V}$ as in \cref{eq:primdual-operators}.
The iterates of \cref{alg:nofob_mom} are decomposed as $x_k = (y_k,z_k)$ with $y_k\in\Hsec$ and $z_k\in\Hdual$ for all $k\in\N$.
The second primal-dual algorithm is then given by \cref{alg:nofob_mom} with the following design parameters:
\begin{equation}\label{eq:primdual-res-params}
	\begin{gathered}
		M_k = \underbrace{\begin{bmatrix} \tau^{-1}\Id - V^*\circ(\Id + \sigma D^{-1})^{-1}\circ T_{-z_k}\circ \sigma V & 0 \\ 0 & \sigma^{-1}\Id \end{bmatrix}}_{\widehat{M}_k} - \widehat{E}, \\
		S = \begin{bmatrix} \Id & 0 \\ 0 & \tau\sigma^{-1}\Id \end{bmatrix}
		\quad\text{and}\quad
		\gamma_k = \tau
	\end{gathered}
\end{equation}
where $\tau,\sigma > 0$ and $T_a \colon \Hdual \to \Hdual \colon z \mapsto z - a$ is the translation by $a \in \Hdual$.
Note that the current iterate $z_k$ is used in the construction of $M_k$ and that $S\in\mathcal{P}(\Hsec\times\Hdual)$ for all $\tau,\sigma > 0$.

With these design parameters, the correction operator becomes
\begin{equation}\label{eq:primdual-res-corrop}
	\gamma_kM_k - S
	=
	\tau \begin{bmatrix} -E -V^*\circ (\Id + \sigma D^{-1})^{-1}\circ T_{-z_k}\circ \sigma V  & 0 \\ 0 & 0 \end{bmatrix}.
\end{equation}
Inserting this and the other operators into the forward step,
\begin{equation*}
	\begin{aligned}
		(\hat{y}_k,\hat{z}_k) \coloneqq\;
		& M_k(y_k,z_k) - C(y_k,z_k) + \gamma_k^{-1}(\gamma_{k-1}M_{k-1}-S)(y_k,z_k) \\
		& - \gamma_k^{-1}(\gamma_{k-1}M_{k-1}-S)(y_{k-1},z_{k-1}),
	\end{aligned}
\end{equation*}
where $(\hat{y}_k,\hat{z}_k)\in\Hsec\times\Hdual$, yields
\begin{align*}
	\hat{y}_k
	&= \tau^{-1}y_k - (2Ey_k + Ey_{k-1}) - Fy_k \\
	&\quad - V^*(\Id + \sigma D^{-1})^{-1}(z_k + \sigma Vy_k) \\
	&\quad - V^*(\Id + \sigma D^{-1})^{-1}(z_{k-1} + \sigma Vy_k) \\
	&\quad + V^*(\Id + \sigma D^{-1})^{-1}(z_{k-1} + \sigma Vy_{k-1}), \\
	\hat{z}_k
	&= \sigma^{-1}z_k.
\end{align*}
To see that the backward step
\begin{align*}
	(M_k + A)^{-1} = (\widehat{M}_k - \widehat{E} + \widehat{A} + \widehat{E} + \widehat{V})^{-1} = (\widehat{M}_k + \widehat{A} + \widehat{V})^{-1},
\end{align*}
can be evaluated efficiently requires some extra attention.
The operator $\widehat{M}_k + \widehat{A} + \widehat{V}$ does not have the lower block-triangular structure as in the algorithm in \cref{sec:primdual-tri}.
We can therefore not evaluate its inverse using the same back substitution approach as before and computing it at a general point seems intractable.
However, $(\widehat{M}_k + \widehat{A} + \widehat{V})^{-1}$ is only evaluated at $(\hat{y}_k,\hat{z}_k)$ and the kernel has been specifically designed such that the backward step can be efficiently evaluated in this point.
First use
\begin{gather*}
	(y_{k+1},z_{k+1}) = (\widehat{M}_k + \widehat{A} + \widehat{V})^{-1}(\hat{y}_k,\hat{z}_k) \\
	\iff
	(\hat{y}_k,\hat{z}_k) \in (\widehat{M}_k + \widehat{A} + \widehat{V})(y_{k+1},z_{k+1}).
\end{gather*}
Writing out the inclusion problem explicitly yields
\begin{align*}
	\left\{
	\begin{alignedat}{3}
		\hat{y}_{k} &{}\in{} (\tau^{-1}\Id + B)y_{k+1} - V^*(\Id + \sigma D^{-1})^{-1}(z_k + \sigma Vy_{k+1}) + V^*z_{k+1}, \\
		\hat{z}_{k} &{}\in{} -Vy_{k+1} + (\sigma^{-1}\Id + D^{-1})z_{k+1}.
	\end{alignedat}
	\right.
\end{align*}
Using that $z_k = \sigma \hat{z}_k$ in the first row and solving for $z_{k+1}$ in the second row results in
\begin{align*}
	\left\{
	\begin{alignedat}{3}
		\hat{y}_{k} &{}\in{} (\tau^{-1}\Id + B)y_{k+1} - V^*(\Id + \sigma D^{-1})^{-1}(\sigma\hat{z}_k + \sigma Vy_{k+1}) + V^*z_{k+1}, \\
		z_{k+1} &{}={} (\Id + \sigma D^{-1})^{-1}(\sigma\hat{z}_k + \sigma Vy_{k+1}).
	\end{alignedat}
	\right.
\end{align*}
Inserting the second row into the first and solving for $y_{k+1}$ gives
\begin{align*}
	\left\{
	\begin{alignedat}{3}
		y_{k+1} &{}={} (\Id + \tau B)^{-1}(\tau\hat{y}_k), \\
		z_{k+1} &{}={} (\Id + \sigma D^{-1})^{-1}(\sigma\hat{z}_k + \sigma Vy_{k+1}).
	\end{alignedat}
	\right.
\end{align*}
Finally, inserting the expressions for $\hat{y}_k$ and $\hat{z}_k$ gives us the following algorithm.
\begin{algorithm}[H]
	\caption{Primal-Dual Method with Resolvent Corrected Kernel}
	\label{alg:primdual-res}
	Consider problem \cref{eq:primdualprob-prim}.
	With $y_0, y_{-1} \in \Hsec$ and $z_0,\nu_0\in\Hdual$, for all $k\in\N$ iteratively perform
	\begin{align*}
		\nu_{k+1} &= (\Id + \sigma D^{-1})^{-1}(z_k + \sigma Vy_k) \\
		y_{k+1} &= (\Id + \tau B)^{-1}( y_k - \tau V^*(z_k + \nu_{k+1} - \nu_{k}) - \tau (2Ey_k - Ey_{k-1}) - \tau Fy_k) \\
		z_{k+1} &= (\Id + \sigma D^{-1})^{-1}(z_k + \sigma Vy_{k+1})
	\end{align*}
	where $\tau,\sigma > 0$.
\end{algorithm}

We see that, compared to our other primal-dual method \cref{alg:primdual-tri}, we require one extra evaluation of the resolvent of $D^{-1}$ each iteration.
Apart from that, \cref{alg:primdual-res}, also only requires one evaluation of $(\Id + \tau B)^{-1}$, $V$ and $V^*$, given that $Vy_{k+1}$ is stored for the next iteration.
Still, the resulting per-iteration computational cost is higher compared to \cref{alg:primdual-tri} and most other primal-dual methods.
Exactly how much more expensive this method is will depend on the problem being solved and in some cases it is negligible.
The main reason for presenting \cref{alg:primdual-res}, apart from its novelty, is to further demonstrate the flexibility of the nonlinear kernel framework.

\begin{cor}\label{cor:primdual-res-conv}
	Let \cref{ass:primdualprob-ass} hold and consider problem \cref{eq:primdualprob-prim} and \cref{alg:primdual-res}.
	If the step-sizes satisfy
	\begin{align*}
		2\tau\sigma\norm{V}^2 + \tau(2\delta + \tfrac{\beta}{2}) < 1,
	\end{align*}
	then $y_k \rightharpoonup y^\star$ and $z_k \rightharpoonup z^\star$ where $y^\star$ is a solution to \cref{eq:primdualprob-prim} and $(y^\star,z^\star)$ is a solution to \cref{eq:primdualprob}.
	\begin{proof}
		Due to the structures of $S$ and $C$ we can conclude that $C$ is $\beta^{-1}$-cocoercive w.r.t.\ $S$ since
		\begin{align*}
			\norm{C(y,z)-C(y',z')}_{S^{-1}}^2
			&= \norm{Fy-Fy'}^2 \\
			&\leq \beta\inprod{Fy-Fy'}{y-y'} \\
			&= \beta\inprod{C(y,z)-C(y',z')}{(y,z)-(y',z')}
		\end{align*}
		for all $(y,z)\in\Hsec\times\Hdual$.
		We have previously established that $A$ is maximally monotone and, since we assume a solution exists, \cref{ass:prob-assump} holds.

		For \cref{ass:alg-assump}, we first note that $\gamma_k = \tau > 0$ and, hence, that the first assumption is satisfied.
		For the Lipschitz continuity of $\gamma_kM_k-S$ we recall the definition of the operator in \cref{eq:primdual-res-corrop}.
		The operator $E$ is, by assumption, $\delta$-Lipschitz continuous, and $(\Id + \sigma D^{-1})^{-1}\circ T_-{z_k}$ is $1$-Lipschitz since both the resolvent and translation are $1$-Lipschitz continuous.
		The operator $-\tau(E + V^*\circ (\Id + \sigma D^{-1})^{-1}\circ T_-{z_k} \circ \sigma V )$ is therefore $(\tau\delta + \tau\sigma\norm{V}^2)$-Lipschitz continuous for all $k\in\N$.
		Since
		\begin{align*}
			&\norm{(\gamma_kM_k-S)(y,z)-(\gamma_kM_k-S)(y',z')}_{S^{-1}}^2 \\
			&\qquad=
			\begin{aligned}[t]
				\| &\tau(E + V^*(\Id + \sigma D^{-1})^{-1}(z_k + \sigma V))y \\
				&- \tau(E + V^*(\Id + \sigma D^{-1})^{-1}(z_k + \sigma V) )y'\|^2
			\end{aligned} \\
			&\qquad\leq
			(\tau\delta+\tau\sigma\norm{V}^2)^2\norm{y-y'}^2 \\
			&\qquad\leq
			(\tau\delta+\tau\sigma\norm{V}^2)^2\norm{(y,z)-(y',z')}_S^2
		\end{align*}
		for all $(y,z)\in\Hsec\times\Hdual$, $\gamma_kM_k-S$ is $(\tau\delta + \tau\sigma\norm{V}^2)$-Lipschitz continuous w.r.t.\ $S$ for all $k\in\N$.
		The result now follows from \cref{thm:main-conv}.
	\end{proof}
\end{cor}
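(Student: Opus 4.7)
My plan is to apply Theorem \ref{thm:main-conv} to the reformulation of \cref{eq:primdualprob} into the standard form \cref{eq:prob} via the operators $A$, $C$ of \cref{eq:primdual-operators}, with the scaling $S$, nonlinear kernel $M_k$, and step sizes $\gamma_k = \tau$ specified in \cref{eq:primdual-res-params}. Maximal monotonicity of $A$ and $\zer(A+C) \neq \emptyset$ were already established in the discussion following \cref{eq:primdual-operators}, so the only thing left in \cref{ass:prob-assump} is to pin down the cocoercivity constant of $C$ with respect to the present $S = \mathrm{diag}(\Id, \tau\sigma^{-1}\Id)$. Because $C$ acts only on the primal block and the $\Hsec$-component of $S^{-1}$ is $\Id$, this reduces to the $\beta^{-1}$-cocoercivity of $F$, yielding $\ell = \beta$.

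Next I would verify \cref{ass:alg-assump}. The uniform lower bound $\gamma_k = \tau > 0$ is immediate, so the real task is to bound the Lipschitz constant of $\gamma_k M_k - S$. From \cref{eq:primdual-res-corrop} this operator vanishes on the dual block and equals $-\tau(E + V^{*} \circ (\Id + \sigma D^{-1})^{-1} \circ T_{-z_k} \circ \sigma V)$ on the primal block. Since $E$ is $\delta$-Lipschitz, the resolvent $(\Id + \sigma D^{-1})^{-1}$ and the translation $T_{-z_k}$ are both nonexpansive, and $\sigma V$, $V^{*}$ contribute factors $\sigma\norm{V}$ and $\norm{V}$, a triangle inequality gives Lipschitz constant $\tau(\delta + \sigma\norm{V}^2)$ in the ordinary norm. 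To convert this to Lipschitz continuity with respect to $S$, I use that the output lies in the primal block so $\norm{\cdot}_{S^{-1}}$ reduces to $\norm{\cdot}$, while on the input side $\norm{y} \le \norm{(y,z)}_S$ by block-diagonality of $S$. This gives $L_k = \tau(\delta + \sigma\norm{V}^2)$, uniform in $k$.

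Finally, I would insert $\ell = \beta$ and $L_k = \tau(\delta + \sigma\norm{V}^2)$ into the convergence criterion $1 - L_{k-1} - L_k - \tfrac{\gamma_k \ell}{2} \ge \epsilon$ of \cref{thm:main-conv}, which becomes
\begin{equation*}
1 - 2\tau(\delta + \sigma\norm{V}^2) - \tfrac{\tau\beta}{2} \ge \epsilon,
\end{equation*}
exactly the stated step-size inequality (the strict inequality produces some positive $\epsilon$). Weak convergence of $(y_k)$ and $(z_k)$ then follows from item \cref{thm:itm:weakconv} of \cref{thm:main-conv}, and the corresponding limit point solves the primal-dual inclusion \cref{eq:primdualprob}, hence its primal component solves \cref{eq:primdualprob-prim}.

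The only subtle point — and what I would expect to be the main obstacle — is that $M_k$ genuinely depends on the iterate through $T_{-z_k}$, so one has to be careful that the Lipschitz constant of $\gamma_k M_k - S$ is indeed independent of $z_k$. This works out cleanly here because the translation is an isometry and $\gamma_k M_k - S$ acts trivially in its dual argument, so the constant $L_k$ is uniform in $k$ and the iterate-dependence of the kernel causes no trouble in the analysis.
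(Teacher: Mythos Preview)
Your proposal is correct and follows essentially the same route as the paper's own proof: verify $\ell=\beta$ from the block-diagonal $S$, compute $L_k=\tau(\delta+\sigma\norm{V}^2)$ via nonexpansiveness of the resolvent and translation together with the block structure of $\gamma_kM_k-S$, and then plug both into the condition of \cref{thm:main-conv}. Your explicit remark that the iterate-dependence through $T_{-z_k}$ does not affect $L_k$ is a nice addition but is not something the paper singles out.
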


\begin{rem}
	As stated in \cref{rem:add-mom-tract}, the approach for adding momentum presented in \cref{sec:add-momentum} and \cref{alg:nofob_mom_add_mom} does not yield a tractable algorithm when applied to \cref{alg:primdual-res}.
	The kernel of \cref{alg:primdual-res} was designed in such a way that the backward step is only cheaply computed at the point given by the forward step and it is therefore not straightforward to apply the latter to the forward step with momentum.
	However, this is easily fixed.
	We regain computability of the backward step if we add $\theta(z_k-z_{k-1})$ according to
	\begin{align*}
		M_k = \begin{bmatrix} \tau^{-1}\Id - V^*\circ (\Id + \sigma D^{-1})^{-1}\circ T_{-z_k - \theta(z_k - z_{k-1})}\circ \sigma V & 0 \\ 0 & \sigma^{-1}\Id \end{bmatrix} - \widehat{E}
	\end{align*}
	and use this kernel in \cref{alg:nofob_mom_add_mom} instead.
	Since this operator only differs from the one in \cref{eq:primdual-res-params} by a translation, it does not modify any Lipschitz constants, and the convergence can be proved using the same approach as in \cref{cor:add-mom-conv}.
\end{rem}

\section{Conclusion}

We have presented a forward-backward method with a nonlinear resolvent and a novel momentum correction.
The design freedom of the nonlinear resolvent allows us to interpret numerous methods as special cases of this forward-backward method.
Existing special cases include the forward-(half)-reflected-backward method, the forward-reflected-Douglas\--Rachford method and the primal-dual methods of V\~u\--Condat and Chambolle\--Pock.
New algorithms include momentum versions of the previously mentioned algorithms and two new four-operator primal-dual splitting methods.
Our convergence conditions either regain or improve on the already known conditions for the existing methods, establishing parity of our more general analysis with the more specialized approaches.
We believe that this parity of analysis and the great amount of freedom in the parameter choices of our algorithm can prove useful for the understanding of existing algorithms and the development of new ones.

\paragraph{Acknowledgments}
All authors have been supported by ELLIIT: Excellence Center at Linköping-Lund in Information Technology.
The first and last authors have also been provided founding by the Swedish Research Council.
The Wallenberg AI, Autonomous Systems and Software Program (WASP) have supported the work of the second and last author.

\bibliography{references}

\end{document}